\documentclass{patmorin}
\usepackage{pat}
\usepackage[T1]{fontenc}
\usepackage[utf8]{inputenc}
\usepackage{todonotes,float,subcaption}
\usepackage{algorithm,algpseudocode}
\usepackage{comment}
\usepackage{indentfirst}
\DeclareMathOperator{\FAlt}{FAlt}
\DeclareMathOperator{\FSym}{FSym}
\DeclareMathOperator{\Sym}{Sym}
\DeclareMathOperator{\Alt}{Alt}
\DeclareMathOperator{\PSL}{PSL}
\DeclareMathOperator{\GL}{GL}

\DeclareMathOperator{\gn}{gn}
\DeclareMathOperator{\ugn}{ugn}
\DeclareMathOperator{\stww}{stww}
\DeclareMathOperator{\tww}{tww}
\DeclareMathOperator{\utww}{utww}
\DeclareMathOperator{\dw}{dw}

\newcommand{\C}{\mathcal C}

\usepackage[mathlines]{lineno}
\newcommand*\patchAmsMathEnvironmentForLineno[1]{%
 \expandafter\let\csname old#1\expandafter\endcsname\csname #1\endcsname
 \expandafter\let\csname oldend#1\expandafter\endcsname\csname end#1\endcsname
 \renewenvironment{#1}%
    {\linenomath\csname old#1\endcsname}%
    {\csname oldend#1\endcsname\endlinenomath}}%
\newcommand*\patchBothAmsMathEnvironmentsForLineno[1]{%
 \patchAmsMathEnvironmentForLineno{#1}%
 \patchAmsMathEnvironmentForLineno{#1*}}%
\AtBeginDocument{%
\patchBothAmsMathEnvironmentsForLineno{equation}%
\patchBothAmsMathEnvironmentsForLineno{align}%
\patchBothAmsMathEnvironmentsForLineno{flalign}%
\patchBothAmsMathEnvironmentsForLineno{alignat}%
\patchBothAmsMathEnvironmentsForLineno{gather}%
\patchBothAmsMathEnvironmentsForLineno{multline}%
}

\usepackage[longnamesfirst,numbers,sort&compress]{natbib}

\definecolor{brightmaroon}{rgb}{0.76, 0.13, 0.28}
\definecolor{linkblue}{rgb}{0, 0.337, 0.227}
\newcommand{\defin}[1]{\emph{\textcolor{brightmaroon}{#1}}}
\makeatletter
\def\mathcolor#1#{\@mathcolor{#1}}
\def\@mathcolor#1#2#3{%
  \protect\leavevmode
  \begingroup
    \color#1{#2}#3%
  \endgroup
}
\makeatother

\usepackage[inline]{enumitem}
\newcommand{\sm}{\smallsetminus}

\title{Quasirandomness and Uniform Twin-Width}
\author{
George Kontogeorgiou\thanks{Center for Mathematical Modeling (CNRS IRL2807), University of Chile, Santiago, Chile. \\Supported by ANID Basal Grant CMM FB210005 and ANID-FONDECYT Postdoctorado Grant No. 3250479. \\Email:{ \tt gkontogeorgiou@dim.uchile.cl}
}\hspace{5mm}Bobby Miraftab\thanks{School of Computer Science, Carleton University, Ottawa, Canada. Email: {\tt bobby.miraftab@gmail.com}}}
\begin{document}

\maketitle
\begin{abstract}
    For every nontrivial finite group, we prove that its quasirandom degree gives a polynomial lower bound on its uniform twin-width, whereas its minimum faithful complex representation degree gives a linear upper bound. For nonabelian finite simple groups, these two parameters coincide, so uniform twin-width is polynomially equivalent to quasi-randomness in that class, yielding a new definition of quasirandomness in the sense of Gowers. We use the lower bound to prove that uniform twin-width is unbounded over finite groups, which helps us construct finitely presented groups with finite twin-width but infinite uniform twin-width. This answers a question of Bonnet, Geniet, Tessera and Thomass\'e. Finally, we determine the uniform twin-width of all three Thompson groups.
\end{abstract}
%%%%%%%%%%%%%%%%%%%%%%%%%%%%%%%%%%%%%%%%%%%%%%%%%%%%%%%%%%%%%%%%%%%%%%%%%%%%%%%%%%%%%%%%%%%%%%%%%%%%
\section{Introduction}

\defin{Twin-width} is a graph parameter first introduced in \cite{bonnet2021twin}. It is a measure of the complexity of a graph defined through contraction sequences. The study of twin-width has revealed deep connections with many other concepts within structural and algorithmic graph theory, including FO-model checking, graph colorings, and the area of algorithmic meta-theorems.

Twin-width is also a meaningful concept in geometric group theory; the twin-width of an infinite graph (and in particular of a Cayley graph) is defined as the supremum of the twin-widths of all its finite induced subgraphs. In Twin-width II \cite{bonnet2021twin2}, Bonnet, Geniet, Kim and Thomass\'e proved that finiteness of twin-width is a group invariant for finitely generated groups. That is, either every (locally finite) Cayley graph of a group has finite twin-width, or none. One may therefore say that a finitely generated group has finite or infinite twin-width, respectively. The question of whether there exist finitely generated groups of infinite twin-width was also posed in \cite{bonnet2021twin2} and was later answered in \cite{BGTT} in the affirmative. We remark that a useful characterization of the finitely generated groups of infinite twin-width remains unknown. In \cite{kontogeorgiou2026accessibility}, the authors showed that finiteness of twin-width is unrelated to accessibility.   

Twin-width VII \cite{BGTT} introduced a new approach to twin-width for groups. By coming up with new notions of twin-width for permutation matrices, for group actions, and eventually for groups, Bonnet, Geniet, Tessera and Thomass\'e arrived at the definition of \emph{uniform twin-width}. Unlike twin-width, uniform twin-width is a well-defined parameter for groups that takes values in $\mathbb{N}\cup\{\infty\}$. Moreover, finiteness of uniform twin-width is preserved under many standard group operations, including extensions, products, direct sums, direct limits, finite-index supergroups, and quotients by finite normal subgroups. Finally, if a group has finite uniform twin-width, then it also has finite twin-width. 

The authors of \cite{BGTT} conjectured that the converse of the latter statement is false. They proposed as a candidate counterexample the \emph{lampshuffler group}, which has finite twin-width, and offered some preliminary evidence that it has infinite uniform twin-width:

\begin{thm}{\rm\cite[Theorem 6.19]{BGTT}}\label{known:unbounded}
    The lampshuffler group $L$ has finite uniform twin-width if and only if there is a uniform bound on the uniform twin-widths of all finite groups. 
\end{thm}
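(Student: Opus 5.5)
The plan is to prove both directions using two facts about uniform twin-width from \cite{BGTT} that were recalled above: it is monotone under taking subgroups, and finiteness is preserved under (suitably uniform) direct limits and under extensions. The starting point is the structure $L = \FSym(\mathbb{Z}) \rtimes \mathbb{Z}$. Here $\FSym(\mathbb{Z})$ is the group of finitely supported permutations of $\mathbb{Z}$, and the generator of $\mathbb{Z}$ acts on it by conjugation with the shift $x\mapsto x+1$.

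\emph{Necessity.} Suppose $\utww(L)=k<\infty$. Every finite group $F$ embeds in $\Sym(|F|)$ by Cayley's theorem. For any $n$, $\Sym(n)$ embeds in $\FSym(\mathbb{Z})\le L$ as the permutations supported on $\{1,\dots,n\}$. Since uniform twin-width does not increase when passing to subgroups, $\utww(F)\le \utww(\Sym(|F|))\le \utww(L)=k$. So $k$ bounds the uniform twin-width of every finite group. The only point to check is that the subgroup monotonicity in \cite{BGTT} applies to arbitrary (not necessarily finitely generated) subgroups. I expect this to hold because uniform twin-width is defined through finite subsets of the group, and any finite subset of a subgroup is also a finite subset of the ambient group.

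\emph{Sufficiency.} Suppose every finite group has uniform twin-width at most $k$. Write $\FSym(\mathbb{Z})=\bigcup_n \Sym(\{-n,\dots,n\})$, an increasing union of finite symmetric groups, each of uniform twin-width at most $k$. The first step is to show $\utww(\FSym(\mathbb{Z}))\le k$, or at least that it is finite. Since uniform twin-width only sees finite subsets, any finite $S\subseteq \FSym(\mathbb{Z})$ lies in some $\Sym(\{-n,\dots,n\})$. It remains to pass from a good ordering of this finite subgroup to a good ordering of the whole group: order each coset of $\Sym(\{-n,\dots,n\})$ by the good ordering and concatenate the cosets. Elements of $S$ act within cosets, so the resulting permutation matrices are block-diagonal copies of the same bounded-twin-width matrix. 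Block-diagonal sums of such matrices keep bounded twin-width. This is exactly the mechanism behind the closure of finiteness under direct limits in \cite{BGTT}, and the point of the hypothesis is that the bound must be uniform in $n$. The second step is to note that $\mathbb{Z}$ has finite uniform twin-width and apply the closure of finiteness under extensions to $1\to\FSym(\mathbb{Z})\to L\to\mathbb{Z}\to 1$. This gives $\utww(L)<\infty$.

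The main obstacle is the sufficiency direction, specifically the extension step. One has to check that the extension theorem of \cite{BGTT} accepts a normal subgroup that is only known to have finite uniform twin-width and is not finitely generated. If it demanded more, I would instead directly order $L$ lexicographically as $\mathbb{Z}\times \FSym(\mathbb{Z})$. Then left multiplication by the shift generator is a block permutation composed with conjugation by the shift. I would bound the twin-width of that conjugation map on a fixed good ordering of $\FSym(\mathbb{Z})$, choosing the orderings of $\Sym(\{-n,\dots,n\})$ compatibly so that shifting supports behaves well.
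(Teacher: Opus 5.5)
Your argument is correct. It follows the standard route behind \cite[Theorem 6.19]{BGTT}, which the paper only cites and does not reprove. Necessity comes from $F\hookrightarrow\Sym(|F|)\le\FSym(\mathbb{Z})\le L$ together with subgroup monotonicity. Sufficiency comes from closure of $\utww$ under increasing unions with a uniform bound, followed by \cref{lem:extension} applied to $1\to\FSym(\mathbb{Z})\to L\to\mathbb{Z}\to 1$.

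The one step you assert rather than prove is ``uniform twin-width only sees finite subsets.'' Your coset construction gives a different order for each $n$, so obtaining a single order good for all of $\FSym(\mathbb{Z})$ requires a compactness argument over total orders; citing BGTT's direct-limit closure supplies exactly this.

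The obstacle you anticipate in the extension step does not arise, because \cref{lem:extension} needs no finiteness hypothesis on $N$. Order $G$ coset-lexicographically: cosets $Nt$ by $G/N$, then within each coset by $N$. The right translation $x\mapsto xg$ then sends $nt\mapsto (nn')t'$, where $tg=n't'$. Its matrix is therefore a translation matrix of $G/N$ with right translations of $N$ substituted, and \cref{lem:sub} applies. No conjugation by the shift ever appears, so your fallback is unnecessary.
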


In this paper, we verify the intuition of Bonnet, Geniet, Tessera and Thomass\'e. In fact, we prove something more general. 

\begin{thm}\label{thm:Houghton}

Every Houghton group $H_n$ with $n\geq2$ has finite twin-width but infinite uniform twin-width.

\end{thm}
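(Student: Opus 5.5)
The plan has two independent halves: finite twin-width and infinite uniform twin-width.

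\emph{Finite twin-width.} Recall that $H_n$ consists of the permutations of $X_n=\{1,\dots,n\}\times\mathbb N$ that are eventually translations on each ray. There is a short exact sequence
\[
1\to \FSym(X_n)\to H_n\to \mathbb Z^{n-1}\to 1 .
\]
For $n\ge 3$, $H_n$ is finitely generated. I expect finite twin-width to follow from the geometry of $H_n$ in the same way as for the lampshuffler group, which \cite{BGTT} treats: a Cayley graph of $H_n$ is built from finitely supported permutations decorated by a bounded-dimensional abelian quotient. The approach is to construct explicit contraction sequences with bounded red degree on finite induced subgraphs of the Cayley graph. The alternative is to exhibit $H_n$ as quasi-isometric to a structure already known to have bounded twin-width, and then invoke the quasi-isometry invariance from Twin-width II \cite{bonnet2021twin2}. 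The case $n=2$ needs separate care, since $H_2$ is not finitely generated. There I would argue through the finitely generated subgroups in the definition used in the paper, or note that twin-width is defined for the relevant finitely generated situation.

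\emph{Infinite uniform twin-width.} The key input is a statement proved later in the paper: uniform twin-width is unbounded over finite groups. I would obtain it from the lower bound by quasirandom degree. The alternating groups $\Alt(k)$ have quasirandom degree $k-1$ for $k\ge 7$, which tends to infinity, so their uniform twin-widths are unbounded. Now $H_n$ contains $\FSym(X_n)$, hence $\Alt(k)$ for every $k$. Uniform twin-width should be monotone under passing to subgroups, since uniform twin-width quantifies over all finite subsets and actions. Therefore $\utww(H_n)\ge \utww(\Alt(k))$ for all $k$, and so $\utww(H_n)=\infty$. If monotonicity is not available verbatim, I would instead mimic Theorem~\ref{known:unbounded}: show that finiteness of $\utww(H_n)$ forces a uniform bound on the uniform twin-width of all finite symmetric groups. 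This uses the same argument that \cite{BGTT} give for the lampshuffler group, which also contains every finite symmetric group as finitely supported permutations.

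\emph{Main obstacle.} The hardest step is the finite twin-width of $H_n$, because it requires a concrete contraction-sequence argument adapted to the ray structure. For each finite subset of the Cayley graph, I would try ordering the elements first by their $\mathbb Z^{n-1}$-image, with blocks in lexicographic order. Within a block, I would order by the finitely supported permutation, encoded via the lampshuffler order of \cite{BGTT}, and then apply the grid/mixed-minor criterion. The lower bound itself is routine once the quasirandom lower bound is available.
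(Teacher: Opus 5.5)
\paragraph{Verdict.} Your infinite-uniform-twin-width half is the paper's own argument. It uses $\Alt(k)\le\FSym(X_n)\le H_n$, the unboundedness of $\utww(\Alt(k))$ from the quasirandom lower bound, and subgroup monotonicity. Monotonicity is available verbatim as \cite[Lemma~6.1]{BGTT} (Lemma~\ref{known:subgroup}), so no detour through Theorem~\ref{known:unbounded} is needed.

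\paragraph{The gap.} The finite twin-width half has a genuine gap. You list candidate strategies but carry out none of them, and you never check a bound on red degree or grid number. The candidates are explicit contraction sequences on Cayley graphs, a quasi-isometry to an unnamed structure, and an order on group elements by $\mathbb Z^{n-1}$-image followed by an unspecified ``lampshuffler order''. The quasi-isometry route also lacks a foundation. What the paper quotes from Twin-width II (Lemma 42) is independence of the finite generating set, not quasi-isometry invariance.

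\paragraph{The missing idea.} Work with a faithful action on a well-ordered set instead of a Cayley graph.
\begin{enumerate}[label=(\roman*)]
\item Enumerate the ray points level by level, $(1,1)\prec(2,1)\prec\cdots\prec(n,1)\prec(1,2)\prec\cdots$, with the centre $0$ placed first in the paper's star model. This gives an order isomorphism with $\mathbb N$.
\item Each $g\in H_n$ is eventually a translation on every ray, so in this enumeration it displaces every point by at most some constant $B$.
\item If $m_1<\cdots<m_r$ and $g(m_1)>\cdots>g(m_r)$, then $m_r-B\le g(m_r)<g(m_1)\le m_1+B$. Hence $r-1\le m_r-m_1<2B$, so $r\le 2B$.
\item So every element has finite decreasing width, hence finite grid number by Lemma~\ref{lem:grid-decreasing}, hence each $M_g$ has finite twin-width.
\item \cite[Corollary~6.17]{BGTT}, packaged as Theorem~\ref{thm:finite-shuffle}(a), transfers this to $H_n$.
\end{enumerate}
The same argument covers the whole wobbling group and treats all $n\ge 2$ uniformly.

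\paragraph{The case $n=2$.} Your claim that $H_2$ is not finitely generated is false. $H_2$ is the lampshuffler group, generated by a transposition and the shift; it only fails to be finitely presented. In any case, the paper defines finite twin-width for arbitrary groups via the right regular action, so $n=2$ needs no separate care.
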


In particular, this proves the existence of finitely presented groups with this property \cite{brown1987finiteness}. We note that these examples are non-simple and elementary amenable. For this reason, we also provide an example that is simple and non-amenable.

\begin{thm}
    Thompson's group $V$ has finite twin-width but infinite uniform twin-width.
\end{thm}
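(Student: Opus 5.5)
The statement has two halves, and I would treat them separately.

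\emph{Finite twin-width.} Thompson's group $V$ contains $F$ and $T$, and its standard Cayley graph is closely tied to Thompson's group $F$, which is known to have finite twin-width. A cleaner route is to use the characterisation of \cite{BGTT}. There, a finitely generated group has finite twin-width exactly when, for some (equivalently any) finite generating set, the sequence of permutation matrices of the generators' actions on balls admits a bounded-width ``linear order'' witness. I would try to exhibit a left-invariant-like total order on $V$ in which each generator acts with bounded twin-width. The natural candidate is lexicographic order on reduced tree-pair diagrams encoded as words over a finite alphabet. Multiplication by a generator of $V$ changes a normal form only locally, namely within boundedly many carets near the root. Hence right multiplication by a generator preserves lex order up to a bounded number of ``blocks'', and its permutation matrix on any finite set then has grid rank bounded independently of the set. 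Alternatively, if \cite{BGTT} already lists $V$, or groups acting on Cantor sets by prefix replacement, among its finite-twin-width examples, I would simply cite it.

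\emph{Infinite uniform twin-width.} Here I use the paper's main tools: a polynomial lower bound on the uniform twin-width of a finite group in terms of its quasirandom degree, and monotonicity of uniform twin-width under taking subgroups (which I expect \cite{BGTT} to supply). The group $V$ contains a copy of every finite group. Indeed, $V$ acts on the $2^k$ dyadic intervals of length $2^{-k}$ by permuting them, which embeds $\Sym(2^k)$, and hence $\Alt(n)$ for every $n$, into $V$. The quasirandom degree of $\Alt(n)$ is $n-1$ for $n\ge 7$, so the lower bound forces $\utww(\Alt(n))\to\infty$. Since $\utww(V)\ge \utww(\Alt(n))$ for every $n$, we conclude $\utww(V)=\infty$.

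\emph{Main obstacle.} The hardest step is the finite twin-width half. Making ``bounded local change of normal form'' into an actual bound on the twin-width of the permutation matrices needs care, because cancellation in tree-pair diagrams can propagate. I would first try to avoid normal forms altogether. One option is to use the fact that $V$ is quasi-isometric to (or acts nicely on) a structure of bounded twin-width. Another is to order elements by their action on a fixed dense countable set of dyadic points, and check that generators act piecewise-order-preservingly with boundedly many pieces. A smaller point to verify is subgroup monotonicity for uniform twin-width in this generality, including infinite ambient groups; if it is only available for finite-index subgroups, I would instead apply the definition directly, restricting the witnessing action of $V$ to the finite subgroup.
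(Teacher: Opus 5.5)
Your uniform twin-width half is correct and is the paper's argument (Lemma~\ref{lem:V-symmetric}). Permuting the $2^k$ cylinders of length-$k$ words embeds $\Sym(2^k)$ into $V$, and the quasirandom lower bound gives $\utww(\Alt(n))\to\infty$; using $D_{\mathrm{qr}}(\Alt(n))=n-1$ in place of the $\sqrt{\log N}/2$ bound is fine. Subgroup monotonicity then finishes. That is Lemma~\ref{known:subgroup} for arbitrary subgroups, and it is immediate from the definitions anyway: under the restricted order, the matrix of right multiplication by $h\in H$ on $H$ is a submatrix of its matrix on $G$.

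The genuine gap is the finite twin-width half, for which you do not actually give an argument. Your main route depends on the claim that right multiplication by a generator changes a reduced tree-pair normal form only near the root, and so preserves a lexicographic order on encodings up to boundedly many blocks. This is unproved, and the heuristic is shaky. The change is bounded in the number of carets. But carets are inserted and cancelled at leaves matched through the permutation, which can lie anywhere in the other tree, and inserting a caret relabels all later leaves, so the encoding can change globally. Nothing in your sketch bounds the grid number of the resulting translation matrices. The other suggestions (a quasi-isometry to some bounded twin-width structure, or a hoped-for citation) are not arguments.

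The missing idea is that you never need to order $V$ itself. By Lemma~\ref{known:faithful-action}, it suffices to exhibit a faithful action of $V$ on a \emph{well-ordered} set in which each element acts by a matrix of finite twin-width. The paper (Lemma~\ref{prop:V-shuffle}) uses $\C_{00}$, the eventually-zero sequences, ordered shortlex by canonical code. Equivalently, these are the dyadic rationals in $[0,1)$ ordered by denominator and then by value, an order of type $\omega$.

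Suppose $g$ has a prefix table with $m$ branches $u_iz\mapsto v_iz$. For $z\neq0^\infty$ we have $c(u_iz)=u_ic(z)$ and $c(v_iz)=v_ic(z)$, while $u_i0^\infty$ and $v_i0^\infty$ are the least points of their cylinders. So $g$ is increasing on each $u_i\C_{00}$, and $g|_{\C_{00}}$ is covered by $m$ increasing partial maps. Hence $\dw_{<}(g|_{\C_{00}})\leq m$, the grid number of its matrix is at most $m$ (Lemmas~\ref{lem:monotone-cover} and~\ref{lem:grid-decreasing}), and its twin-width is finite by Lemma~\ref{known:grid-tww}. Faithfulness follows from density of $\C_{00}$ and continuity.

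Your third alternative (a piecewise order-preserving action on dyadic points) points this way, but it misses the two features that make it work. First, the natural dense order on dyadic points is not a well-order, so the transfer lemma does not apply to it. Second, in the well-order that does work, the branch domains $u_i\C_{00}$ are interleaved rather than intervals. So the bound has to come from the increasing-cover/decreasing-width argument, not from a block decomposition.
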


We note that, even for finitely presented groups, we do not know of any interesting group-theoretic conditions under which finiteness of twin-width implies finiteness of uniform twin-width. In fact, as seen above, we rule out two such conditions. We leave this as an open problem. 

\paragraph{Methods and outline.}The main observation of this paper is that, for finite simple groups, \textbf{uniform twin-width is an equivalent definition of quasirandomness.} Indeed, we bound the uniform twin-width of any finite simple group $G$ by polynomial functions of the minimum dimension of a nontrivial irreducible complex representation of $G$. The latter is one of the original measures of quasirandomness introduced by Gowers \cite{Gowers}. We also prove some more general bounds that are true for all finite groups. Explicitly, our main theorem is as follows. For a nontrivial finite group $G$, let $D_{\mathrm{qr}}(G)$ denote the minimum degree of a nontrivial irreducible complex representation of $G$, and let $D_{\mathrm f}(G)$ denote the minimum degree of a faithful complex representation of $G$.

\begin{thm}\label{thm:quasirandomness}
There exist absolute constants $c,C>0$ such that every nontrivial finite group $G$ of order $N$ satisfies
\[
  cD_{\mathrm{qr}}(G)^{1/4}
  \leq \utww(G)
  \leq CD_{\mathrm f}(G)\qquad
\text{and}\qquad \utww(G)\leq C N^{3/14}.\]
If $G$ is nonabelian simple, then $D_{\mathrm{f}}(G)=D_{\mathrm qr}(G)\geq \frac{\sqrt{\log N}}{2}$, and consequently
\[
  cD_{\mathrm{qr}}(G)^{1/4}
  \leq \utww(G)
  \leq CD_{\mathrm{qr}}(G)\qquad
\textit{and}\qquad
  c(\log N)^{1/8}
  \leq \utww(G)
  \leq C N^{3/14}.
\]
\end{thm}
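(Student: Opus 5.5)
We first recall the definition of uniform twin-width from \cite{BGTT}. For a finite group $G$, $\utww(G)$ is the supremum, over all finite generating sets (equivalently, over all actions of $G$ given by permutation matrices), of the twin-width of the associated matrix. Equivalently, it is the maximum twin-width of the $N\times N$ permutation-type matrices $M_g$, $g\in S$, for the left-regular representation, taken over the best simultaneous ordering of $G$. The proof splits into three independent parts: the lower bound, the upper bounds, and the facts specific to simple groups.

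\textbf{Lower bound.} Here the plan is to exploit the fact that bounded twin-width of a family of permutation matrices, all read in one common ordering of $G$, forces a very structured partition. A contraction sequence of width $d$ yields, at the stage with roughly $k$ parts, a partition of $G$ into $k$ consecutive intervals $I_1,\dots,I_k$. For each generator $g$, the matrix restricted to $I_i\times I_j$ is mixed for only $O(d)$ pairs $j$ per row $i$. We then want a product set $AgB$ with $A,B$ unions of intervals of density about $1/k$ and $|AgB|$ small, or, more to the point, sets $A,B$ of size about $N/k$ with $\mathbf 1_A * \mathbf 1_{B}$ far from uniform. This would contradict Gowers' mixing lemma, which states $\|\mathbf 1_A*\mathbf 1_B-\mu(A)\mu(B)\|_2\le \sqrt{N/D_{\mathrm{qr}}}\,\ldots$, as soon as $k$ is polynomial in $d$ and smaller than a power of $D_{\mathrm{qr}}$. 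Choosing $k\approx d^{2}$ and balancing the error terms should give $d\gtrsim D_{\mathrm{qr}}^{1/4}$. The main obstacle is the combinatorial step converting ``few mixed cells for every generator'' into a non-mixing pair of sets. We must do this for a cleverly chosen generating set, presumably all of $G$ or a random set, because uniform twin-width takes a supremum over generating sets. I would first try the generating set $S=G$. Then each $g$ gives a permutation matrix, and a counting/pigeonhole argument over $g$ produces intervals $I_i,I_j$ such that many $g$ map a large part of $I_i$ into $I_j$. This yields $|I_j^{-1}gI_i|$ too concentrated, contradicting quasirandomness.

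\textbf{Upper bounds.} Given a faithful representation $\rho:G\to \GL_n(\mathbb C)$, we may take it unitary. The plan is to embed $G$ into a compact Lie group and order $G$ along a ``space-filling'' order of the $n\times n$ unitary matrices, e.g.\ a recursive grid ordering of the entries. With this order, left multiplication by any fixed element is an isometry, so it maps grid cells to a bounded number of grid cells at each scale. This should give a contraction sequence of width $O(n)$, uniformly in the generating set. The $N^{3/14}$ bound should come from combining this with a general bound: either $D_{\mathrm f}$ is small, or one uses a large abelian or Sylow-type subgroup together with the extension/product bounds from \cite{BGTT}. I would try to optimize the exponent $3/14$ via a case split on the size of the largest abelian subgroup. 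This uses the known fact that every group of order $N$ has an abelian subgroup of size at least about $N^{c}$, so I expect this exponent bookkeeping to be delicate.

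\textbf{Simple groups.} If $G$ is nonabelian simple, every nontrivial representation has trivial kernel. Hence a nontrivial irreducible representation of minimal degree is faithful, giving $D_{\mathrm f}=D_{\mathrm{qr}}$. The bound $D_{\mathrm{qr}}\ge \sqrt{\log N}/2$ follows from a classical argument. A faithful $d$-dimensional representation embeds $G$ in $U(d)$, and by Jordan's theorem (in a quantitative form) $G$ has an abelian normal subgroup of index bounded by a function of $d$. By simplicity, this forces $N\le$ that bound, i.e.\ $N\le e^{O(d^2)}$, which is the stated bound. Substituting into the lower bound gives $c(\log N)^{1/8}$.
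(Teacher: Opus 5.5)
\textbf{Lower bound.} This step has a genuine gap. You work with the interval partitions produced by each matrix's own contraction sequence. These partitions depend on $g$, so there is no fixed pair $(I_i,I_j)$ to pigeonhole over. The ``concentration'' you then want (many $g$ sending a large part of $I_i$ into $I_j$) is also not what small twin-width directly gives. You concede this combinatorial step is missing, and the exponent $1/4$ is asserted rather than derived. (Note also that $\utww(G)$ is a minimum over orders of $G$ of $\sup_{g\in G}\stww(M_{\gamma(g)})$; no generating sets enter.)

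The missing idea is to pass to grid number, which constrains \emph{every} interval partition. Since $\gn(M)\leq\Gamma(\stww(M))$ with $\Gamma(r)\sim 2r$, in an order realizing $u=\ugn(G)$ no $M_{\gamma(g)}$ has a $k$-grid, where $k=u+1$. Fix one equipartition $G=P_1\sqcup\cdots\sqcup P_k$ into intervals, the same for all $g$. Each $g$ then has an empty zone, i.e.\ lies in some $E_{ij}=G\setminus P_i^{-1}P_j$. Gowers' product-free form $|A||B||C|\leq N^3/D_{\mathrm{qr}}$, applied with $A=P_i^{-1}$, $B=P_j$, $C=E_{ij}$, gives $|E_{ij}|\leq 4k^2N/D_{\mathrm{qr}}$. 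The union bound over the $k^2$ pairs then forces $D_{\mathrm{qr}}\leq 4k^4\leq 4(\Gamma(\utww(G))+1)^4$.

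\textbf{Bound $\utww(G)=O(D_{\mathrm f})$.} Ordering $G\subset U(n)$ along a space-filling order does not give linear width. $U(n)$ has real dimension $n^2$, and an isometric image of a grid cell can meet a number of cells exponential in the dimension. So the natural bound from this method is exponential in $n^2$, and nothing in your sketch brings it down to $O(n)$. The paper instead uses Nikolov--Pyber: a complex linear group of degree $D_{\mathrm f}$ has a permutation representation of degree $m=O(D_{\mathrm f}^2)$ with abelian kernel. Every $m\times m$ permutation matrix has $\stww\leq\lceil\sqrt m\rceil$ (merge rows in blocks of size $\lceil\sqrt m\rceil$, then all columns, then the blocks). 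Abelian groups have $\utww=2$, and the extension bound $\utww(G)\leq\max\{\utww(K),\utww(G/K)\}$ for $K\triangleleft G$ finishes.

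\textbf{Bound $O(N^{3/14})$.} Large abelian subgroups cannot help: they need not be normal, and subgroup monotonicity only yields lower bounds. Instead, iterate the extension bound along a composition series. This reduces to cyclic factors ($\utww=2$) and nonabelian simple factors $S$. For the latter, the known minimal faithful permutation degrees give $\mu(S)=O(|S|^{3/7})$, hence $\utww(S)\leq\lceil\sqrt{\mu(S)}\rceil=O(|S|^{3/14})$.

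\textbf{Simple groups.} Your paragraph is fine once you add the trivial direction $D_{\mathrm{qr}}\leq D_{\mathrm f}$: a faithful representation of a nontrivial group has a nontrivial irreducible constituent. A generic $e^{O(d^2)}$ Jordan bound yields $c\sqrt{\log N}$ rather than the explicit constant $1/2$, which the paper cites from Gowers. This still suffices for the conclusion with absolute constants.
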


We hope that this discovery may offer additional motivation for studying twin-width in the context of groups.

In \cref{sec3}, starting from Gowers' mixing lemma for quasirandom groups (\cref{known:gowers-mixing}), we prove \cref{thm:quasirandomness}, which in turn implies that the uniform twin-width of finite groups is unbounded. In \cref{sec4} we apply this result to prove \cref{thm:Houghton}. Finally, in pursuit of more examples, in \cref{sec5} we compute the uniform twin-width of all three Thompson's groups. We now proceed to \cref{sec2}, where we collect many prerequisites for our work.

\section{Preliminaries}\label{sec2}

Throughout this paper, we use the notation $[k]:=\{1,\dots,k\}$.

\paragraph{Twin-width of graphs and groups.} A \defin{vertex identification} in a graph $G$ is an operation that deletes two distinct vertices $u,v\in V(G)$ and introduces a new vertex $w$ with
\[
  N(w)=\bigl(N(u)\cup N(v)\bigr)\sm\{u,v\}.
\]
For a finite graph $G$ on $n$ vertices, consider a sequence
\[
  G=G_n\longrightarrow G_{n-1}\longrightarrow\cdots\longrightarrow G_1
\]
where each $G_i$ is obtained from $G_{i+1}$ by a vertex identification. Its \defin{width} is $\max_{1\leq i\leq n}\Delta(G_i)$. The minimum width among all vertex identification sequences is the \defin{strict twin-width} of $G$, denoted $\stww(G)$. The strict twin-width of an infinite graph is defined to be the supremum of the strict twin-widths of its finite induced subgraphs.

Strict twin-width was introduced in \cite{BGTT}. It is simpler to define than ordinary twin-width, denoted $\tww(G)$, while remaining equally suitable for studying Cayley graphs. Specifically,

\begin{lem}
{\rm \cite[Equation (1), Section 3.1.1]{BGTT}}
\label{lem:strict-ordinary}
for every finite group $G$,
\[
        \max\{\Delta(G),\operatorname{tww}(G)\}
        \leq
        \operatorname{\stww}(G)
        \leq
        \Delta(G)+\operatorname{tww}(G),
\]
\end{lem}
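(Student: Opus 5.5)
The plan is to prove both inequalities by comparing, step by step, a vertex identification sequence under the two measures. Fix a finite graph $G$; for a finite group this will be a Cayley graph, with $\Delta(G)$ its degree. I will use the standard definition of ordinary twin-width via trigraphs. In a contraction, the new vertex $w$ gets a black edge to $x$ if both $u,v$ were black-adjacent to $x$, and a red edge if the two adjacencies to $x$ disagree or either is red. Then $\tww(G)$ is the minimum over sequences of the maximum red degree. Both parameters minimize over the same set of sequences: a sequence of pairs to identify. So it suffices to compare the trigraph $T_i$ and the strict graph $G_i$ along one common sequence. The key observation is that $G_i$ is exactly the underlying graph of $T_i$, red and black edges together forgotten in colour. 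This follows by induction: in $T_i$, $x$ is adjacent to $w$ (any colour) iff $x$ was adjacent to $u$ or to $v$, which is precisely the rule $N(w)=(N(u)\cup N(v))\sm\{u,v\}$.

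For the lower bound, take an optimal strict sequence of width $\stww(G)$. Then $G_n=G$ gives $\Delta(G)\le\stww(G)$. Every red degree in $T_i$ is at most the total degree in $T_i$, which equals the degree in $G_i$ by the observation. Hence the same sequence witnesses $\tww(G)\le\stww(G)$.

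For the upper bound, take a sequence that is optimal for $\tww$. I must bound the total degree in $T_i$ by red degree plus $\Delta(G)$, so the black degree of every vertex must stay at most $\Delta(G)$. The claim is that a black edge $xy$ in $T_i$ between parts $X,Y$ (the sets of original vertices merged into $x$ and $y$) means every original pair in $X\times Y$ is an edge. Again this is by induction: black survives a merge only if both merged parts were black-adjacent to $x$. Thus if $y_1,\dots,y_k$ are black neighbours of $x$, pick any original $a\in X$. Then $a$ is adjacent in $G$ to a vertex of each of the disjoint parts $Y_1,\dots,Y_k$, so $k\le\Delta(G)$. This gives total degree at most $\operatorname{tww}(G)+\Delta(G)$ in each $G_i$, proving $\stww(G)\le\Delta(G)+\tww(G)$.

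The only delicate point is the bookkeeping of the induction relating $T_i$ and $G_i$, in particular the invariant that black edges mean complete bipartite adjacency between parts. Everything else is immediate, and nothing about groups is used beyond $G$ being a finite graph.
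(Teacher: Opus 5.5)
Your proof is correct. The identified graph $G_i$ is exactly the underlying graph of the contracted trigraph, which gives $\Delta(G)\le\stww(G)$ and $\tww(G)\le\stww(G)$. The invariant that a black edge between parts $X,Y$ forces $X\times Y\subseteq E(G)$ caps every black degree at $\Delta(G)$, which gives the upper bound.

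The paper does not prove this lemma itself; it only cites Equation (1) of BGTT. There the inequality is a statement about graphs, so reading $G$ as a finite graph (e.g.\ a Cayley graph), as you did, is the right interpretation, and your argument is the standard justification of the cited inequality.
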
 
\hspace{-7mm}which implies that, for graphs of bounded degree, finiteness of twin-width is equivalent to finiteness of strict twin-width. We therefore opt to work only with strict twin-width in this paper, and shall always refer to it merely as "twin-width".

The following result is foundational.

\begin{lem}{\rm\cite[Lemma 42]{bonnet2021twin2}}
    Either all Cayley graphs of a finitely generated group have finite twin-width, or none.
\end{lem}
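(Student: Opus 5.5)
The statement to prove is the foundational lemma that finiteness of twin-width does not depend on the choice of finite generating set. Fix two finite generating sets $S$ and $S'$ of a group $\Gamma$, with Cayley graphs $X=\mathrm{Cay}(\Gamma,S)$ and $X'=\mathrm{Cay}(\Gamma,S')$. Assuming $\stww(X)\le k<\infty$, I will show that $\stww(X')$ is bounded by a function of $k$, $|S|$ and $|S'|$.

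The key observation is metric. Each $s'\in S'$ is a word of length at most $r$ in $S^{\pm1}$, for a fixed $r$ depending only on $S$ and $S'$. Hence $X'$ is a spanning subgraph of the $r$-th power $X^r$. In $X^r$, vertices are adjacent when their distance in $X$ is at most $r$.

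The plan then has three steps.

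\textbf{Step 1: passing to a subgraph.} Strict twin-width does not behave well under deleting edges. For bounded-degree graphs, however, I will work with the matrix/partition viewpoint of contraction sequences. The strict twin-width $k$ of a finite induced subgraph $H$ of $X$ bounds $\Delta(H)$ and gives a partition sequence in which every part is \emph{homogeneous} towards all but at most $k$ other parts. In a bounded-degree graph, a non-adjacent part pair is automatically homogeneous. So the relevant data is the bound on the number of parts adjacent, in $H$, to any given part. This data is monotone under deleting edges, provided we measure it as ``number of parts joined to $P$ by at least one edge''.

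I will therefore reformulate: a bounded-degree graph has finite twin-width iff it admits a contraction sequence in which every part has bounded size along the way? No — parts grow. Instead I will count the parts $Q$ with an edge between $P$ and $Q$, which I call the ``neighbouring parts'' of $P$. Deleting edges only decreases this count. Any strictly red-edge view is therefore dominated by the count in the supergraph. This settles the passage from $X^r$ down to $X'$: edges are only removed, so bounds transfer.

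\textbf{Step 2: from $X$ to $X^r$.} I must show that the neighbouring-part count in $X^r$ is bounded when it is bounded in $X$. Take a part $P$ of a partition arising in the sequence for $H\subseteq X$. The parts adjacent to $P$ in $H^r$ are those reached by paths of length at most $r$ in $H$. Following such a path, each step passes through a part that is a neighbouring part of the previous one. This gives at most $(k+1)^r$ parts.

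There is a subtlety: paths in $X$ between vertices of a finite induced subgraph of $X'$ may leave that subgraph. To handle it, I take the induced subgraph $H'$ of $X'$ on vertex set $U$. I enlarge $U$ to $\bar U$, its $r$-neighbourhood in $X$, and run the sequence for $X[\bar U]$. I then restrict all partitions to $U$, discarding empty parts. Restriction does not increase neighbouring-part counts.

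\textbf{Step 3: back to strict twin-width.} Finally I convert the bounded neighbouring-part count plus the degree bound $\Delta(X')\le|S'^{\pm1}|$ into a bound on $\stww$. In a strict contraction, the degree of a part equals its number of neighbouring parts. So the width is at most $(k+1)^r$.

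\textbf{Main obstacle.} I expect the main obstacle to be checking that the definition of vertex identification used here indeed makes the degree of the contracted vertex equal to the number of neighbouring parts, rather than something finer. I also need to confirm that the $X$-sequence gives the bound ``at most $k$ neighbouring parts'' directly, as the degree of the part in $G_i$. If vertex identification is as defined (the union of neighbourhoods), this is immediate. In that case, both Step 2 and Step 3 are straightforward bookkeeping.
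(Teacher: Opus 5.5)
Your proof is correct. The paper gives no argument of its own for this lemma; it simply imports it from Twin-width~II, where it is stated for ordinary twin-width. Your proposal therefore serves as a self-contained proof, and a quantitative one: it yields $\stww(\mathrm{Cay}(\Gamma,S'))\leq(\stww(\mathrm{Cay}(\Gamma,S))+1)^r$.

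The key points all check out:
\begin{itemize}
\item With the union-of-neighbourhoods identification, the vertex of $G_i$ representing a part $P$ is adjacent to the vertex representing $Q$ exactly when some edge joins $P$ and $Q$. So the width of a sequence is the maximum number of neighbouring parts.
\item This count only decreases under deleting edges, and under restricting the partitions to a vertex subset (skipping merges that become trivial).
\item Walks of length at most $r$ in the quotient graph bound the count for the $r$-th power by $k+k^2+\cdots+k^r<(k+1)^r$.
\item Enlarging $U$ to its $r$-neighbourhood $\bar U$, which is finite by local finiteness, correctly handles paths that leave $U$.
\end{itemize}

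Two corrections to the write-up.

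First, your opening claim in Step~1, that strict twin-width \emph{does not behave well under deleting edges}, is false for the definition used here. It is ordinary (red-edge) twin-width that fails to be subgraph-monotone. Strict twin-width is monotone under arbitrary subgraphs, which is exactly what you go on to verify. So Steps~1 and~3 collapse into that single observation. You should delete the detour through homogeneous part pairs and the false start about bounded part sizes.

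Second, bounded degree is not needed anywhere in the strict argument. It enters only if you read the lemma with ordinary twin-width, as in the cited source. In that case you should close with \cref{lem:strict-ordinary}. Apply $\max\{\Delta,\tww\}\leq\stww\leq\Delta+\tww$ to the finite induced subgraphs of a Cayley graph with respect to a finite generating set; their degree is at most twice the size of that set. This shows that finiteness of ordinary twin-width and finiteness of strict twin-width coincide for such graphs. In the paper's own convention, where ``twin-width'' already means strict twin-width, no such transfer is needed.
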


Accordingly, one may say that a finitely generated group has finite or infinite twin-width, although ordinary twin-width is not a numerical parameter of the group itself. To define such a parameter, we first turn to ordered matrices. Twin-width for matrices historically predates twin-width for graphs, having been introduced by Guillemot and Marx in \cite{guillemot2014finding}.

\paragraph{Twin-width and grid number of binary matrices.} 

A matrix (of any cardinality) is \defin{binary} if each of its entries is $0$ or $1$, and it is a \defin{bijection matrix} if each row and each column contains exactly one entry equal to $1$. A \defin{row/column identification} on a matrix is an operation in which two consecutive rows or columns $v_1$ and $v_2$ are superimposed. That is, they are replaced by a single row or column (respectively), each entry of which is equal to the maximum of the corresponding entries in $v_1$ and $v_2$. Given a finite $m\times n$ matrix $M$ and a row/column identification sequence $M:=M_{m+n},\dots,M_2$, we can define the latter's width as the maximum number of $1$'s in a single row or column in any matrix of the sequence. The minimum width among all row/column identification sequences for a matrix $M$ is defined to be the strict twin-width (henceforth merely called "twin-width") of $M$. The twin-width of an infinite matrix is defined to be the supremum of the twin-widths of all its finite submatrices.      

Let $(X,<_{X})$ and $(Y,<_{Y})$ be totally ordered sets.  
A \defin{$k$-grid} in a binary $X\times Y$ matrix $M$ consists of order-preserving partitions $X=X_1\sqcup\cdots\sqcup X_k$, and $Y=Y_1\sqcup\cdots\sqcup Y_k$
into non-empty intervals such that every \defin{zone} $X_i\times Y_j$ contains a $1$.  
The \defin{grid number} $\gn(M)$ is the
largest $k$ for which $M$ has a $k$-grid, or infinity if no finite maximum exists. A very useful result is that twin-width and grid number are bounded by functions of each other. Recall that the notation $f(r)\sim g(r)$ means that
$\lim_{r\to\infty}\frac{f(r)}{g(r)}=1$.

\begin{lem}{\rm \cite[Theorem~3.2, with degree bound $d=1$]{BGTT}}\label{known:grid-tww}
There are non-decreasing functions $F,\Gamma\colon\N\to\N$ such that
$F(r)=2^{O(r)}$, and $\Gamma(r)\sim 2r$,
and every finite or infinite bijection matrix $M$ satisfies
\[
  \stww(M)\leq F(\gn(M)),
  \qquad
  \gn(M)\leq \Gamma(\stww(M)),
\]
whenever the quantities on the left are finite.
\end{lem}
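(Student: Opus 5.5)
The statement has two inequalities of very different difficulty. I would prove the easy one, $\gn\leq\Gamma(\stww)$ with $\Gamma(r)\sim 2r$, by a first-bad-moment argument, and the harder one, $\stww\leq F(\gn)$ with $F(r)=2^{O(r)}$, by a greedy merging procedure controlled by a Marcus--Tardos-type counting bound.

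\emph{Lower bound on twin-width.} Let $M$ be a bijection matrix with a $k$-grid. Pick one $1$-entry in each zone $X_i\times Y_j$ and restrict to the finite submatrix spanned by these rows and columns. It is still a $k$-grid, and twin-width is monotone under taking submatrices, so we may assume $M$ is finite. Fix an identification sequence of width $w$. Row identifications act on consecutive rows, so at every moment the current row parts are intervals of $X$, and likewise the column parts are intervals of $Y$. Consider the first step after which some part contains an entire grid interval; by symmetry, say a row part $R\supseteq X_i$. This step merged rows, so just before and just after it no column part contains an entire $Y_j$. Hence every column part, being an interval, meets at most two consecutive intervals $Y_j$. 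The row $R$ has a $1$ in every zone $X_i\times Y_j$, $j\in[k]$, so its $1$'s lie in at least $\lceil k/2\rceil$ distinct column parts. Thus $R$ has at least $k/2$ ones, giving $w\geq k/2$ and so $k\leq 2w+O(1)$. This yields $\Gamma(r)\sim 2r$. If one is careful about the boundary parts, the additive constant should be at most $1$.

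\emph{Upper bound on twin-width.} Assume $\gn(M)<k$; it suffices to treat finite $M$. I would run a Guillemot--Marx style sequence. We maintain partitions $\mathcal R$ of the rows and $\mathcal C$ of the columns into consecutive intervals, and track the quotient matrix whose cell $(R,C)$ is $1$ iff $M[R,C]$ contains a $1$. Initially every row and column of this quotient matrix has exactly one $1$, since $M$ is a bijection matrix. The invariant we want is that every row and every column of the quotient has at most $c_k$ ones. The key counting input is the Marcus--Tardos theorem in Fox's quantitative form: a $t\times t$ quotient matrix with no $k$-grid has at most $c_k t$ nonzero cells, where $c_k=2^{O(k)}$. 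A $k$-grid in the quotient would lift to a $k$-grid in $M$, so the quotient is indeed $k$-grid-free. The quotient has $t$ row parts, so by averaging a constant fraction of the $\lfloor t/2\rfloor$ pairs of consecutive row parts have combined row weight at most $4c_k$. Merging disjoint such pairs one at a time keeps row weights below $4c_k$. Column weights can only decrease, since merging rows never increases the number of nonzero cells in any column. We then do the same for columns. The resulting quotient matrix is again $k$-grid-free, so the averaging can be repeated until a single row part and a single column part remain. This gives width $O(c_k)=2^{O(k)}$.

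\emph{Main obstacle.} I expect the difficulty to lie in this second part. Marcus--Tardos is usually stated for pattern containment, whereas here we need it for $k$-grids in a block matrix, so one must check that Fox's $2^{O(k)}$ constant transfers to this setting. The averaging step also has to be organized so that, within one round, merging some pairs does not destroy the light-weight property of the pairs still to be merged. I would handle this by choosing a matching of disjoint light pairs at the start of each round and using that a merged row's weight is at most the sum of its two constituents' weights.
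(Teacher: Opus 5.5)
The paper does not prove this lemma. It imports it from \cite{BGTT} (Theorem~3.2 with $d=1$), so there is no in-paper argument to compare with. Your proposal is a self-contained reconstruction along the standard route: a first-bad-moment argument for the lower bound, and Guillemot--Marx style merging driven by the grid form of Marcus--Tardos with Fox's constant for the upper bound. It is essentially correct, with one step in the upper bound that needs fixing.

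\textbf{Lower bound.} This half is clean and gives $\gn(M)\leq 2\stww(M)$ exactly. You showed $w\geq\lceil k/2\rceil$, so no additive constant or boundary care is needed. The reductions to finite matrices are also fine. Just note that a finite submatrix of a bijection matrix is only a partial permutation matrix, with initial row and column weights at most $1$ rather than exactly $1$; this changes nothing.

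\textbf{Upper bound.} The obstacle you single out is not one. A $k$-grid is exactly an occurrence of the all-ones matrix $J_k$ as an interval minor, and Fox's $2^{O(k)}$ bound is proved in that form. Padding with zero rows or columns gives the rectangular version: a $k$-grid-free $t\times t'$ matrix has at most $c_k\max(t,t')$ ones.

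The step that does need care is the one you gloss over by treating the quotient as $t\times t$. For a $t\times t'$ quotient, the $\lfloor t/2\rfloor$ row pairs have total weight bounded only by $c_k\max(t,t')$, so the average pair weight is of order $c_k t'/t$. Suppose a round merges all the light pairs it finds. Nothing in your argument stops the rows from halving each round while the columns shrink only by a factor $3/4$. Then $t'/t$ can grow like $(3/2)^j$, and the claim that a constant fraction of row pairs is light eventually fails.

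The fix is to always merge in the dimension with more parts, one pair at a time. Suppose $t\geq t'$ and $t\geq 2$. The row pairs have total weight at most $c_k t$, and there are $\lfloor t/2\rfloor\geq t/3$ of them, so some pair has weight at most $3c_k$. Merging it keeps every row weight at most $3c_k$ and increases no column weight. This always makes progress, and it gives $\stww(M)\leq 3c_{\gn(M)+1}=2^{O(\gn(M))}$.
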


We remark (see \cite[Remark 3.1]{BGTT}) that, for every $m\times m$ permutation matrix $M$, we have $\gn(M)\leq \sqrt{m}$, hence $\stww(M)\leq 2^{O(\sqrt{m})}$. In fact, let us prove a sharper bound for $\stww(M)$.

\begin{lem}\label{lem:gridbound}
Every $m\times m$ permutation matrix $P$, with arbitrary fixed orders on its rows and columns, satisfies $\operatorname{stww}(P)\leq \lceil\sqrt m\rceil$.
\end{lem}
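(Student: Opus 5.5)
Set $s=\lceil\sqrt m\rceil$. The plan is to build an explicit row/column identification sequence in two phases and check that every row and column of every intermediate matrix has at most $s$ ones.

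\textbf{Phase 1: merge rows into blocks.} Cut the ordered rows into consecutive intervals $R_1,\dots,R_t$ with $t=\lceil m/s\rceil\le s$, each of size at most $s$. Inside each block, identify consecutive rows one by one until each $R_i$ becomes a single row.
\begin{itemize}
\item A row obtained by merging part of a block is the union of at most $s$ original rows. Since $P$ is a permutation matrix, it contains at most $s$ ones.
\item Columns are never merged in this phase, so each column still contains exactly one $1$.
\end{itemize}
So the width stays at most $s$ throughout Phase 1. At the end we have a $t\times m$ matrix. Each of its columns has exactly one $1$, and each of its rows has at most $s$ ones.

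\textbf{Phase 2: merge columns.} Now identify consecutive columns one at a time, left to right, until everything collapses.
\begin{itemize}
\item Any merged column is a $0/1$ vector of length $t$, so it has at most $t\le s$ ones.
\item Merging columns never increases the number of ones in a row: a row's support can only shrink or stay the same under superimposing two entries. So every row keeps at most $s$ ones.
\end{itemize}
After all columns are merged we have a $t\times 1$ matrix. Merging its rows produces single columns and rows with at most $t\le s$ ones, and the sequence ends at a single entry.

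\textbf{Checking the bound and the only subtlety.} The key inequality is $t=\lceil m/\lceil\sqrt m\rceil\rceil\le\lceil\sqrt m\rceil$. This holds because $m\le \lceil\sqrt m\rceil^2$, so $m/s\le s$ and hence $\lceil m/s\rceil\le s$. The other point to check is that only \emph{consecutive} lines are ever merged. This holds because every block $R_i$ is an interval in the given row order, and columns are merged left to right. Since the argument uses nothing about the orders beyond consecutiveness, it works for arbitrary fixed orders on rows and columns. This gives $\stww(P)\le\lceil\sqrt m\rceil$.
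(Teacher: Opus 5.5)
Your proof is correct and follows the paper's argument essentially verbatim: merge rows within consecutive blocks of size at most $\lceil\sqrt m\rceil$, then collapse all columns (each column now has at most $\lceil m/\lceil\sqrt m\rceil\rceil\leq\lceil\sqrt m\rceil$ ones), and finally merge the remaining row blocks. Your remarks that the blocks are intervals (so only consecutive lines are ever identified) and that column merges never increase row counts make explicit what the paper leaves implicit.
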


\begin{proof}
Put $a=\lceil\sqrt m\rceil$. First we partition the rows into sets of size at most $a$ and we merge the rows in each part. Throughout this phase, every row contains at most $a$ nonzero elements, while every column contains exactly one nonzero element.
There are now
\[
  b=\left\lceil\frac{m}{a}\right\rceil\leq a
\]
row parts.  We then merge all columns into one.  During this phase, every row contains at most $a$ nonzero elements, and every column contains at most
$b\leq a$ elements.  Finally, we merge the $b$ row parts.  Every matrix in the resulting row/column identification sequence has at most $a$ nonzero elements in each row and column.
\end{proof}

Here is another useful lemma regarding the twin-width of matrices.

\begin{lem}{\rm \cite[Lemma~3.5]{BGTT}}\label{lem:sub}
Suppose that a bijection matrix $M'$ is obtained from a bijection
matrix $M$ by substituting a bijection matrix $N_{ij}$ for every
nonzero entry $(i,j)$ of $M$. Then
\[
  \operatorname{stww}(M')
  =
  \max\!\left\{
      \operatorname{stww}(M),
      \max_{M(i,j)=1}\operatorname{stww}(N_{ij})
  \right\}.
\]
This remains valid for infinite ordered bijection matrices.
\end{lem}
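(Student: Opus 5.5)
We prove the two inequalities separately: "$\geq$" from monotonicity of strict twin-width under taking submatrices, and "$\leq$" from an explicit row/column identification sequence that first collapses every block and then contracts $M$ itself. Throughout we use that $M'$ has a block structure. Its rows are partitioned into consecutive intervals $R_i$, one for each row $i$ of $M$. Its columns are partitioned into consecutive intervals $C_j$, one for each column $j$ of $M$. The zone $R_i\times C_j$ equals $N_{ij}$ if $M(i,j)=1$ and is zero otherwise. Since $M$ is a bijection matrix, each $R_i$ meets exactly one nonzero zone, and so does each $C_j$.

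\emph{Lower bound.} First we check that strict twin-width does not increase under taking submatrices with the induced orders. Given an identification sequence for a matrix $A$, restrict every intermediate part (an interval of rows or columns) to the rows and columns of a submatrix $B$. The restricted parts are still intervals of $B$ in the induced order, and they evolve by merges of consecutive parts or stay unchanged. Deleting the repeated matrices therefore yields an identification sequence for $B$. Each row part of $B$ has support contained in that of the corresponding row part of $A$, and similarly for columns, so the width does not increase. Now $N_{ij}$ is the submatrix $R_i\times C_j$ of $M'$. Choosing one row from each $R_i$ and one column from each $C_j$ that meet a $1$ of the nonzero block in that row or column gives a submatrix equal to $M$. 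This choice is possible because each block is a bijection matrix. Hence $\stww(M')$ is at least the right-hand side.

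\emph{Upper bound.} Let $w$ be the right-hand side. We process the nonzero blocks one at a time. For the block $N_{ij}$, we run an optimal identification sequence of $N_{ij}$ on the rows of $R_i$ and the columns of $C_j$, until $R_i$ is a single row and $C_j$ a single column. During this phase, a current row part inside $R_i$ has nonzero entries only in columns of $C_j$, because the other zones in $R_i$ vanish. Its number of $1$s is therefore exactly its count within the current contraction of $N_{ij}$, which is at most $\stww(N_{ij})$; the same holds for column parts inside $C_j$. Blocks already collapsed contribute single rows and columns with exactly one $1$. Blocks not yet touched contribute original rows and columns with exactly one $1$. Consecutiveness is preserved because $R_i$ and $C_j$ are intervals. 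After every block is processed, the matrix is exactly $M$, and we finish with an optimal sequence for $M$. The total width is at most $w$.

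\emph{Infinite case and main obstacle.} A finite submatrix of $M'$ is contained in a substitution built from a finite submatrix of $M$ and finite submatrices of the $N_{ij}$. These finite pieces need not be bijection matrices. We therefore run the argument above only for $0/1$ matrices having at most one nonzero zone in each block row and block column. That argument uses only this property, not bijectivity, and the lower-bound monotonicity passes to suprema. I expect the main obstacle to be justifying carefully that the width accounting in the block-collapse phase, and the restriction argument for submatrices, respect the consecutiveness constraint on identifications. The rest is bookkeeping.
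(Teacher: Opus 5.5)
The paper gives no proof of this lemma (it is quoted from BGTT). Your argument is the standard one: submatrix monotonicity gives $\geq$, and for $\leq$ you collapse each block along an optimal sequence of $N_{ij}$ and then contract the quotient. It is correct, including the consecutiveness bookkeeping you were worried about. The one step that needs an extra line is the infinite case: after collapsing the blocks of a finite submatrix $A$ of $M'$, the result is only entrywise below the corresponding finite submatrix of $M$, since a zone of $A$ under a $1$ of $M$ can be all zero, and whole block rows or columns of $A$ can be zero and must also be merged. So either note that turning $1$s into $0$s never increases $\stww$, or add to $A$ the partner line of each of its rows and columns; this gives a finite permutation submatrix of $M'$ containing $A$, to which the finite case applies verbatim.
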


Finally, we introduce another easy-to-handle parameter that correlates well with grid number. Recall that for any bijection\footnote{We say "bijection" instead of "permutation" because the total orders on $X$ and $Y$ may not be isomorphic.}
$\sigma\colon X\to Y$ we can define a bijection matrix $M_{\sigma}$ with
\[
  M_\sigma(x,y)=1 \quad\Longleftrightarrow\quad \sigma(x)=y
\]
and with its rows and columns ordered isomorphically to $<_X$ and $<_Y$, respectively.
For a permutation $\sigma$ of a totally ordered set $(X,<)$, its \defin{decreasing width} is
\[
  \dw_{<}(\sigma)
  :=\sup\bigl\{r:\exists x_1<\cdots<x_r
     \text{ with }\sigma(x_1)>\cdots>\sigma(x_r)\bigr\}.
\]

\begin{lem}\label{lem:grid-decreasing}
For every permutation $\sigma$ of a totally ordered set, $\gn(M_\sigma)\leq \dw_{<}(\sigma)$.
\end{lem}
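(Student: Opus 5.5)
Suppose $M_\sigma$ has a $k$-grid, given by interval partitions $X=X_1\sqcup\cdots\sqcup X_k$ of the rows and $Y=Y_1\sqcup\cdots\sqcup Y_k$ of the columns, with every zone $X_i\times Y_j$ containing a $1$. Here both $X$ and $Y$ carry the order $<$, since $\sigma$ is a permutation of $(X,<)$. Our plan is to extract a decreasing sequence of length $k$ directly from the grid. The natural candidates are the anti-diagonal zones $X_i\times Y_{k+1-i}$ for $i=1,\dots,k$. By the grid property, each such zone contains a $1$, so for each $i$ we can pick some $x_i\in X_i$ with $\sigma(x_i)\in Y_{k+1-i}$.

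Next we check monotonicity. Since the $X_i$ are consecutive intervals in order, $x_1<x_2<\cdots<x_k$. Since the $Y_j$ are consecutive intervals in order and the indices $k+1-i$ decrease as $i$ increases, we get $\sigma(x_1)>\sigma(x_2)>\cdots>\sigma(x_k)$. Both inequalities are strict because the parts are disjoint nonempty intervals and the $x_i$ lie in distinct parts. Hence $\dw_<(\sigma)\geq k$.

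Taking the supremum over all $k$ for which a $k$-grid exists gives $\gn(M_\sigma)\leq\dw_<(\sigma)$. This covers the infinite case too: if $\gn(M_\sigma)=\infty$, then grids exist of every finite size, so $\dw_<(\sigma)=\infty$ as well.

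There is no real obstacle. The only care needed is to use the fact that the grid partitions are order-preserving interval partitions, so that membership in the $i$-th part is compatible with the order. The argument uses only the anti-diagonal zones of the grid, not every zone.
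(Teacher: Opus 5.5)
Your proof is correct and follows essentially the same approach as the paper: pick a $1$-entry in each anti-diagonal zone $X_i\times Y_{k+1-i}$ and read off a decreasing sequence of length $k$. Your explicit handling of the case $\gn(M_\sigma)=\infty$ is a harmless addition that the paper leaves implicit.
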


\begin{proof}
Let $X=X_1\sqcup\cdots\sqcup X_k$, $Y=Y_1\sqcup\cdots\sqcup Y_k$ be a $k$-grid of $M_{\sigma}$. For each $i\in [k]$, choose a $1$-entry in the zone $X_i\times Y_{k+1-i}$ and let $x_i\in X_i$ be its row. Then $x_1<\dots<x_k$ but $\sigma(x_1)>\dots>\sigma(x_k)$, because $\sigma(x_i)\in Y_{k+1-i}$ for each $i\in [k]$. Therefore $\dw_<(\sigma)\geq k$. 
\end{proof}

\begin{lem}\label{lem:monotone-cover}
Suppose that a permutation $\sigma$ of a totally ordered set can be partitioned into $r$ increasing sequences. Then $\dw_{<}(\sigma)\leq r$.
\end{lem}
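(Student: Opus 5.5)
This is a pigeonhole argument. Suppose $\sigma$ is partitioned into $r$ increasing sequences, that is, the domain $X$ splits as $X = S_1\sqcup\cdots\sqcup S_r$, where on each part $S_j$ the restriction of $\sigma$ is order-preserving: $x<x'$ in $S_j$ implies $\sigma(x)<\sigma(x')$.

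Take any decreasing configuration $x_1<\cdots<x_s$ with $\sigma(x_1)>\cdots>\sigma(x_s)$. We show that no two of the $x_i$ lie in the same part. Indeed, if $x_i,x_k$ with $i<k$ both lay in $S_j$, then $x_i<x_k$ would give $\sigma(x_i)<\sigma(x_k)$ because $\sigma$ is increasing on $S_j$. But the configuration is decreasing, so $\sigma(x_i)>\sigma(x_k)$, a contradiction.

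Hence the map $i\mapsto$ (index of the part containing $x_i$) is injective from $[s]$ into $[r]$, and so $s\leq r$. Since this holds for every such configuration, the supremum defining $\dw_<(\sigma)$ is at most $r$.

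The one point to handle carefully is the interpretation of "partitioned into $r$ increasing sequences" for an infinite ordered set. Here I would read it as a partition of the domain into $r$ subsets on each of which $\sigma$ is order-preserving; whether each subset is literally indexed as a sequence is irrelevant to the argument. With that reading there is no real obstacle.
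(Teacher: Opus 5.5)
Your proof is correct and is the same pigeonhole argument as the paper's: a decreasing configuration meets each increasing part at most once, so its length is at most $r$. Your reading of ``partitioned into $r$ increasing sequences'' as a partition of the domain into $r$ sets on which $\sigma$ is order-preserving is exactly how the lemma is used later, for example for the branches of a prefix table in Thompson's group $V$.
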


\begin{proof}
A decreasing sequence intersects an increasing sequence at most once, so each element of a longest decreasing sequence lies in a different increasing sequence of the assumed partition, whence the statement follows.
\end{proof}

\paragraph{Uniform twin-width of group actions and groups.} Let $(X,<)$ be a totally ordered set and let $S_X$ be the group of permutations of $X$. Consider a group $G$ and an action $\phi\colon G\rightarrow S_X$. Each $g\in G$ gives rise to a permutation matrix $M_{\phi(g)}$. We say that $\phi$ has \emph{finite twin-width} if there exists a choice of total order for $X$ so that $\stww\left(M_{\phi(g)}\right)<\infty$ for every $g\in G$. Moreover, if $\gamma$ is the right canonical self-action of $G$, then we say that $G$ has \defin{finite twin-width} if $\gamma$ has finite twin-width. This definition expands the definition from the perspective of Cayley graphs that we have for finitely generated groups (see \cite[Lemma 5.1]{BGTT}).

The \emph{uniform twin-width} of the action $\phi$ \emph{with respect to the order} $<$ is given by 
\[\utww_<(\phi):=\sup_{g\in G}\stww(M_{\phi(g)}).\]
The \emph{uniform twin-width} of $\phi$ is defined as
\[\utww(\phi):=\min_{< \text{ is a total order on $X$ }}\utww_<(\phi).\]
Finally, we define the \defin{uniform twin-width} of $G$ as $\utww(G):=\utww(\gamma)$.
 
As mentioned in the introduction, a group with finite uniform twin-width also has finite twin-width. We aim to provide a counterexample to the converse. Towards this goal we mention the following properties of uniform twin-width.

\begin{lem}{\rm\cite[Proposition~6.4]{BGTT}}\label{lem:abelian}
    Every finite abelian group has uniform twin-width $2$.
\end{lem}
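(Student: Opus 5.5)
The plan is to prove the upper bound $\utww(G)\leq 2$ with an explicit total order, reducing to cyclic shifts via \cref{lem:sub}. The lower bound comes from a trivial counting observation. Throughout, $G$ is nontrivial. For the trivial group the only matrix is $1\times 1$ and the value degenerates, so that case is a matter of convention.

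\emph{Lower bound.} Let $m=|G|\geq 2$ and take any $g\in G$, for instance the identity. Then $M_{\gamma(g)}$ is an $m\times m$ permutation matrix with $m\geq 2$, so any identification sequence must begin by identifying two rows or two columns. Two distinct rows of a permutation matrix have their $1$'s in different columns, so the superimposed row contains exactly two $1$'s; the same holds for columns. Hence $\stww(M_{\gamma(g)})\geq 2$ for every order, and $\utww(G)\geq 2$.

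\emph{Upper bound, cyclic case.} Write $G\cong \mathbb Z_{n_1}\times\cdots\times\mathbb Z_{n_k}$ and first treat $\mathbb Z_n$ with the natural order $0<1<\dots<n-1$. Right translation by $a$ is the cyclic shift $x\mapsto x+a \bmod n$. Its matrix is obtained from a $2\times 2$ anti-diagonal permutation matrix by substituting identity matrices of sizes $n-a$ and $a$ for its two $1$'s; when $a=0$ there is a single identity block. By \cref{lem:sub}, it therefore suffices to show that every identity matrix and the $2\times2$ anti-diagonal matrix have twin-width at most $2$.
\begin{itemize}
\item For the identity $I_r$, identify rows $1,2$ (this row now has two $1$'s and each column has one), then columns $1,2$. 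This returns an identity $I_{r-1}$, and iterating keeps the width at most $2$.
\item For the $2\times2$ anti-diagonal matrix, any first merge produces a line with two $1$'s. After that, every matrix has at most two entries per row and column.
\end{itemize}

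\emph{Upper bound, products.} Order $G$ lexicographically by its coordinates. Right translation by $(a_1,\dots,a_k)$ then has the following structure. Its matrix is obtained from the shift matrix of $a_1$ on $\mathbb Z_{n_1}$ by substituting, for each $1$-entry, the translation matrix of $(a_2,\dots,a_k)$ on $\mathbb Z_{n_2}\times\cdots\times\mathbb Z_{n_k}$ with its lexicographic order. This holds because the first coordinate decides the block and the remaining coordinates are translated independently within the block. By induction on $k$ and \cref{lem:sub}, every translation matrix has twin-width at most $\max$ of twin-widths of cyclic shifts, which is at most $2$. So $\utww(G)\leq 2$.

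The main point needing care is checking that the lexicographic order really makes each translation matrix an exact substitution in the sense of \cref{lem:sub}. The blocks must be contiguous intervals in both the row and column orders, and they are, since lexicographic order groups elements with the same first coordinate consecutively. The rest is routine.
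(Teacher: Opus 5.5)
Your proof is correct, and you were right to set the trivial group aside: its only matrix is $1\times 1$, so its width degenerates. The paper gives no proof of this lemma and simply imports it from BGTT, but your argument uses the same machinery the paper uses elsewhere. Your cyclic case is the proof of \cref{lem:cyclicorder} specialized to $\mathbb Z_n$ with the circular order cut at $0$: an anti-diagonal $2\times 2$ pattern with identity blocks, followed by \cref{lem:sub}. Your lexicographic product step is the block-substitution argument behind \cref{lem:extension}, written out explicitly; it is needed because non-cyclic finite abelian groups are not circularly orderable.
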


\begin{lem}{\rm\cite[Lemma~6.1]{BGTT}}\label{known:subgroup}
If $H\leq G$, then $\utww(H)\leq \utww(G)$.
\end{lem}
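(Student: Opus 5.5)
The plan is to restrict an optimal order for $G$ to the subset $H\subseteq G$ and to observe that each permutation matrix of the right regular action of $H$ is a submatrix of the corresponding matrix for $G$. Let $\gamma_G$ and $\gamma_H$ denote the right canonical self-actions of $G$ and $H$. We may assume $\utww(G)<\infty$, since otherwise there is nothing to prove. Fix a total order $<$ on $G$ with $\utww_<(\gamma_G)=\utww(G)$, and let $<_H$ be its restriction to $H$.

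For $h\in H$, right multiplication by $h$ maps $H$ onto itself, since $Hh=H$. Hence the ordered bijection matrix $M_{\gamma_H(h)}$, with rows and columns indexed by $(H,<_H)$, is exactly the submatrix of $M_{\gamma_G(h)}$ on row set $H$ and column set $H$. Both matrices have rows and columns ordered by $<$, since $<_H$ is the restriction of $<$. The lemma then reduces to the claim that strict twin-width does not increase when passing to a submatrix whose rows and columns carry the induced orders.

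For infinite matrices this monotonicity is immediate from the definition as a supremum over finite submatrices. A finite submatrix of the $H\times H$ block is also a finite submatrix of $M_{\gamma_G(h)}$. It therefore suffices to check monotonicity for finite matrices. Take a row/column identification sequence for a finite matrix $M$ and a submatrix $M'$. At each stage, the parts of the sequence for $M$ are intervals of rows and of columns. Intersecting each part with the rows and columns of $M'$, and discarding empty intersections, gives intervals of $M'$. As the sequence for $M$ proceeds, these induced partitions of $M'$ only coarsen, and each coarsening step merges two consecutive parts. Merges that do not change the induced partition are simply skipped. This yields a valid identification sequence for $M'$. Every row or column part of $M'$ at any stage is contained in a part of $M$ at the corresponding stage. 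So the number of $1$'s it contains, counted after superimposition against the coarser column or row parts, is at most the corresponding count in $M$. Thus $\stww(M')\leq\stww(M)$.

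Combining these steps, for every $h\in H$ we get $\stww(M_{\gamma_H(h)})\leq \stww(M_{\gamma_G(h)})\leq \utww(G)$. Therefore
\[
\utww(H)\leq\utww_{<_H}(\gamma_H)\leq\utww(G).
\]
The only step requiring care is the submatrix monotonicity of strict twin-width. The subtle point there is that consecutiveness in $M$ must induce consecutiveness in $M'$. This holds because an interval intersected with a subset is an interval in the induced order, and two adjacent intervals of $M$ intersect $M'$ in adjacent (or empty) intervals of $M'$.
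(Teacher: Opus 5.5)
Your proposal is correct and is essentially the standard argument behind the cited result \cite[Lemma~6.1]{BGTT}, which the paper only cites without proof. You restrict an optimal order on $G$ to $H$, observe that $M_{\gamma_H(h)}$ is the $H\times H$ submatrix of $M_{\gamma_G(h)}$ with the induced orders, and use that strict twin-width does not increase under passing to such submatrices; your check of that monotonicity via the induced contraction sequence (intersect the interval parts, skip merges that become trivial) is sound.
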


\begin{lem}{\rm \cite[Corollary~6.6]{BGTT}}\label{lem:extension}
If $N\triangleleft G$, then we have 
$\operatorname{utww}(G)
  \leq
  \max\!\left\{
      \operatorname{utww}(N),
      \operatorname{utww}(G/N)
  \right\}$.
\end{lem}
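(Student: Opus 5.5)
We prove the lemma by ordering $G$ lexicographically along a coset decomposition. The matrix of every right multiplication then becomes a block substitution of matrices we already control, and \cref{lem:sub} finishes the argument.

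\textbf{Setup.} Fix a total order $<_N$ on $N$ with $\utww_{<_N}(\gamma_N)=\utww(N)$. Fix a total order $<_Q$ on $Q:=G/N$ with $\utww_{<_Q}(\gamma_Q)=\utww(G/N)$. Here $\gamma_N,\gamma_Q$ denote the right regular actions. Choose a transversal $T$ of the right cosets $Nt$; since $N$ is normal these are exactly the elements of $Q$. Every $x\in G$ is then written uniquely as $x=nt$ with $n\in N$ and $t\in T$. Define the order $<$ on $G$ by $n t< n' t'$ if $Nt<_Q Nt'$, or if $t=t'$ and $n<_N n'$. With this order each coset $Nt$ is an interval, and inside it the order is a copy of $<_N$ transported by $n\mapsto nt$.

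\textbf{Computing right multiplication.} Take $g\in G$ and $x=nt$. Let $t'\in T$ be the representative of the coset $Ntg$, and put $m:=tgt'^{-1}$. Then $m\in N$ and
\[
xg=ntg=(nm)\,t' .
\]
On cosets, the map is therefore $Nt\mapsto Ntg$, which is right multiplication by $gN$ in $Q$. Inside the block from $Nt$ to $Ntg$, the map is $n\mapsto nm$, which is right multiplication by $m\in N$. Note that $m$ depends on $t$ and $g$. The choice of right cosets with the decomposition $x=nt$ matters here. If we instead used left cosets, the induced map on $N$ would involve a conjugation, which is not controlled by $\utww(N)$.

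\textbf{Conclusion via \cref{lem:sub}.} The previous step shows that $M_{\gamma(g)}$, in the order $<$, is obtained from $M_{\gamma_Q(gN)}$ (ordered by $<_Q$) by substituting, for each nonzero entry $(Nt,Ntg)$, the bijection matrix of $\gamma_N(m_{t,g})$ ordered by $<_N$ on both sides. By \cref{lem:sub},
\[
\stww(M_{\gamma(g)})=\max\Bigl\{\stww(M_{\gamma_Q(gN)}),\ \max_{t\in T}\stww(M_{\gamma_N(m_{t,g})})\Bigr\}\le\max\{\utww(N),\utww(G/N)\}.
\]
\cref{lem:sub} is stated to remain valid for infinite ordered bijection matrices, so the argument also covers infinite groups. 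Taking the supremum over $g$ gives $\utww_<(\gamma)\le\max\{\utww(N),\utww(G/N)\}$, and hence the claim.

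The only real obstacle is the bookkeeping in the middle step: checking that with right cosets the within-block map is genuinely a right translation in $N$, and that the blocks are contiguous intervals compatible with $<_Q$. Once the decomposition $x=nt$ is fixed, both checks are immediate.
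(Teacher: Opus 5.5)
Your proof is correct. Ordering $G$ lexicographically by the cosets of $N$, via the decomposition $x=nt$, turns each $M_{\gamma(g)}$ into the matrix of right multiplication by $gN$ on $G/N$ with the right translations $M_{\gamma_N(m_{t,g})}$ substituted into its nonzero entries, so \cref{lem:sub} gives the bound, including for infinite groups. The paper does not prove this lemma itself but cites it from \cite{BGTT}, so there is no in-paper proof to compare against; your lexicographic-order-plus-substitution argument is the natural way to establish it.
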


\begin{lem}{\rm\cite[Corollary~6.17]{BGTT}}\label{known:faithful-action}
Let $\phi$ be a faithful action of a group $G$ on a well-ordered set $(X,<)$. If for each $g\in G$, the matrix $M_{\phi(g)}$ has finite twin-width, then $G$ has finite twin-width. If for all $g\in G$ we have $\stww(M_{\phi(g)})\leq d$, then $\utww(G)\leq d$.
\end{lem}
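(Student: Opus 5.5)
The idea is to transport the order on $X$ to an order on $G$ by a lexicographic construction, and then show that right multiplication by a fixed $h$ acts on $G$ ``coordinatewise through $\phi(h)$''. Its matrix then decomposes as an iterated substitution (\cref{lem:sub}) of submatrices of $M_{\phi(h)}$.

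\textbf{The order on $G$.} Write the action on the right, so $x\cdot(gh)=(x\cdot g)\cdot h$. To each $g\in G$ associate the function $f_g\colon X\to X$, $f_g(x)=x\cdot g$; then $f_{gh}=\phi(h)\circ f_g$. Since $\phi$ is faithful, $g\mapsto f_g$ is injective. For $g\neq g'$ let $x_0(g,g')$ be the $<$-least point where $f_g$ and $f_{g'}$ differ; it exists because $(X,<)$ is well-ordered. Declare $g\prec g'$ iff $f_g(x_0)<f_{g'}(x_0)$. This is the lexicographic order on $X^X$ restricted to the image of $G$, and it is a total order on $G$.

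\textbf{Compatibility with right multiplication.} Fix $h$. Since $\phi(h)$ is a bijection, $f_{gh}$ and $f_{g'h}$ agree exactly where $f_g$ and $f_{g'}$ agree. Hence $x_0(gh,g'h)=x_0(g,g')$, and the comparison of $gh$ and $g'h$ is decided by how $\phi(h)$ orders the pair $f_g(x_0),f_{g'}(x_0)$.

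\textbf{Bounding finite submatrices.} Because the twin-width of an infinite matrix is the supremum over its finite submatrices, it suffices to bound $\stww$ of the submatrix of the right-multiplication matrix on a finite set $F\subseteq G$ and its image $Fh$.
\begin{itemize}
\item Take the finitely many points $x_1<\dots<x_k$ that arise as $x_0(g,g')$ for $g,g'\in F$. The order on $F$ is then the lexicographic order of the tuples $(f_g(x_1),\dots,f_g(x_k))$, i.e.\ the leaf order of a rooted tree. Its level-$i$ nodes are the distinct prefixes of length $i$, and the children of a node are ordered by the value at $x_{i+1}$.
\item Multiplication by $h$ maps this tree isomorphically onto the corresponding tree for $Fh$, applying $\phi(h)$ to every coordinate.
\item By induction on $k$: prefixes of length $i$ form intervals in both orders. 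The bijection on length-$(i+1)$ prefixes is obtained from the bijection on length-$i$ prefixes by substituting, for each node, the bijection matrix recording how $\phi(h)$ reorders that node's children.
\item That children bijection is a submatrix of $M_{\phi(h)}$ (rows $\{f_g(x_{i+1})\}$, columns their $\phi(h)$-images, in the order $<$). Its twin-width is therefore at most $\stww(M_{\phi(h)})\le d$.
\item \cref{lem:sub} then gives that the level-$(i+1)$ bijection has twin-width at most $d$, and at level $k$ we obtain the submatrix on $F$.
\end{itemize}
Hence every right-multiplication matrix has twin-width at most $d$ with respect to $\prec$, so $\utww(G)\le d$.

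\textbf{The finiteness statement.} The same argument without a uniform $d$ gives $\stww(M_{\gamma(h)})\le\stww(M_{\phi(h)})<\infty$ for each $h$, so $\gamma$ has finite twin-width.

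\textbf{Main obstacle.} The delicate step is the block structure. One must verify that the rows sharing a prefix form a contiguous interval and are mapped onto a contiguous interval of columns sharing the image prefix. One must also check that the internal reordering of each block depends only on $\phi(h)$ acting on the next coordinate. This is exactly the hypothesis of \cref{lem:sub}.

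I would first check this on the tree picture for a single level, where it reduces to $\phi(h)$ preserving equality of coordinates, and then run the induction.
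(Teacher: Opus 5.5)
Your proof is correct. The paper gives no argument of its own for this lemma; it imports it from \cite[Corollary~6.17]{BGTT}. Your route is a sound self-contained proof:
\begin{enumerate}[label=(\roman*)]
\item order $G$ lexicographically by the orbit functions $f_g$, which is well defined because $X$ is well-ordered;
\item note that right multiplication by $h$ applies $\phi(h)$ coordinatewise and leaves the first point of disagreement unchanged;
\item peel off one coordinate at a time with \cref{lem:sub}.
\end{enumerate}
This gives $\stww(M_{\gamma(h)})\le\stww(M_{\phi(h)})$ for every $h$, with respect to a single order on $G$.

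The two steps you leave implicit are routine. First, a finite submatrix of $M_{\gamma(h)}$ with rows $F_1$ and columns $F_2$ lies inside the bijection submatrix with rows $F_1\cup F_2h^{-1}$ and columns their images, and strict twin-width is monotone under taking submatrices. Second, if $\phi$ is a left action, the right action $x\cdot g=\phi(g)^{-1}(x)$ has matrices $M_{\phi(g^{-1})}$, which satisfy the same hypothesis.
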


\begin{thm}\label{thm:finite-shuffle}
Let $\phi$ be a faithful action of a group $G$ on a well-ordered set $(X,<)$.
\begin{enumerate}[label=(\alph*)]
\item If $\dw_{<}(\phi(g))<\infty$ for every $g\in G$, then $G$ has finite twin-width.
\item If $\sup_{g\in G}\dw_{<}(\phi(g))\leq r$, then
$\utww(G)\leq F(r)$,
where $F$ is the function in~\Cref{known:grid-tww}.
\end{enumerate}
\end{thm}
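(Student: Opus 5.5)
The plan is to reduce both parts to \cref{known:faithful-action} by converting decreasing width into a bound on the twin-width of each individual matrix. The chain is
\[
  \dw_<(\phi(g)) \;\longrightarrow\; \gn\bigl(M_{\phi(g)}\bigr) \;\longrightarrow\; \stww\bigl(M_{\phi(g)}\bigr).
\]

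Fix $g\in G$ and consider the bijection matrix $M_{\phi(g)}$, with rows and columns both ordered by $<$. By \cref{lem:grid-decreasing}, every $k$-grid of $M_{\phi(g)}$ satisfies $k\leq \dw_<(\phi(g))$. Hence $\gn(M_{\phi(g)})\leq \dw_<(\phi(g))$, and in particular the grid number is finite whenever the decreasing width is.

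Next, apply \cref{known:grid-tww}, which holds for infinite bijection matrices as well, with the grid number as its finite input. This gives
\[
  \stww\bigl(M_{\phi(g)}\bigr)\leq F\bigl(\gn(M_{\phi(g)})\bigr)\leq F\bigl(\dw_<(\phi(g))\bigr),
\]
where the second inequality uses that $F$ is non-decreasing.

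For part (a), the hypothesis makes each $\dw_<(\phi(g))$ finite, so each $\stww(M_{\phi(g)})$ is finite. The first clause of \cref{known:faithful-action}, which applies because $\phi$ is faithful and $<$ is a well-order, then shows that $G$ has finite twin-width.

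For part (b), the hypothesis $\dw_<(\phi(g))\leq r$ for all $g$ and monotonicity of $F$ give $\stww(M_{\phi(g)})\leq F(r)$ uniformly in $g$. The second clause of \cref{known:faithful-action} with $d=F(r)$ then yields $\utww(G)\leq F(r)$.

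The only point needing care is the finiteness side condition in \cref{known:grid-tww}, which requires the grid number to be finite before the bound $\stww\leq F(\gn)$ can be used. \Cref{lem:grid-decreasing} supplies exactly this, for both finite and infinite $X$. No further obstacle is expected: the substantive work is already contained in \cref{known:faithful-action}, and the argument above only feeds it the required per-element bound.
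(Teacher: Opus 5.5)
Your proposal is correct and follows the paper's proof: bound $\gn(M_{\phi(g)})$ by $\dw_<(\phi(g))$ via \cref{lem:grid-decreasing}, convert this into a strict twin-width bound using the non-decreasing function $F$ of \cref{known:grid-tww}, and conclude with \cref{known:faithful-action}. You also spell out two points the paper leaves implicit, namely the finiteness side condition and the use of the first clause of \cref{known:faithful-action} in part (a).
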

\begin{proof}
By~\Cref{lem:grid-decreasing},
$\gn(M_{\phi(g)})\leq\dw_{<}(\phi(g))$.  Then~\Cref{known:grid-tww} gives finite twin-width for each $M_{\phi(g)}$, and the uniform bound $F(r)$ in case (b).
We finish case (b) by invoking~\Cref{known:faithful-action}.
\end{proof}

For a nontrivial finite group $G$, we define its \defin{minimum faithful permutation degree} by \[\mu(G)\coloneqq \min\{n\in\mathbb{N}|\text{ }
  \phi:G\rightarrow \Sym(n)\text{ is a faithful permutation representation of }G\}.\]
Combining \cref{lem:gridbound} and \cref{known:faithful-action}, we get:

\begin{lem}\label{lem:permubound}
    For every nontrivial finite group $G$, $\utww(G)\leq \lceil\sqrt{\mu(G)}\rceil$.
\end{lem}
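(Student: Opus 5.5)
Let $n=\mu(G)$ and fix a faithful permutation representation $\phi\colon G\to \Sym(n)$. The plan is to apply \cref{known:faithful-action} to $\phi$, with the bound on each matrix supplied by \cref{lem:gridbound}.

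First, fix any total order $<$ on the $n$-element set $[n]$, for instance the natural one. Since the set is finite, this order is automatically a well-order, so the hypothesis of \cref{known:faithful-action} on the ordered set is met.

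Second, for each $g\in G$ the matrix $M_{\phi(g)}$ is an $n\times n$ permutation matrix, with rows and columns both ordered by $<$. By \cref{lem:gridbound}, which allows arbitrary fixed orders on rows and columns, we get $\stww(M_{\phi(g)})\leq\lceil\sqrt n\rceil$. This bound is uniform in $g$, and in particular each matrix has finite twin-width.

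Third, \cref{known:faithful-action} with $d=\lceil\sqrt n\rceil$ gives $\utww(G)\leq\lceil\sqrt{\mu(G)}\rceil$. This is needed because the action $\phi$ is not the right regular action $\gamma$ used to define $\utww(G)$, so the bound must be transferred from $\phi$ to $G$.

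There is no real obstacle. The one point to check is that \cref{known:faithful-action} applies to finite well-ordered sets and faithful actions of finite groups, which it does as stated. Faithfulness holds by the definition of $\mu(G)$, and $G$ is nontrivial, so $\mu(G)\geq 2$ is well defined.
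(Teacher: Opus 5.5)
Your proposal is correct and is exactly the paper's argument: the paper obtains this lemma by combining \cref{lem:gridbound} and \cref{known:faithful-action}. You apply both to a faithful permutation representation of degree $\mu(G)$ with any order on the finite (hence well-ordered) point set, and your uniform bound $\lceil\sqrt{\mu(G)}\rceil$ on every $\stww(M_{\phi(g)})$ then transfers to $\utww(G)$.
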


Finally, for a group $G$, we define the
\defin{uniform grid number}
\[
  \ugn(G):=\min_{<\text{ is a total order on }G}\ \sup_{g\in G}\gn(M_{\gamma(g)}).
\]

It follows easily from the definitions and from \cref{known:grid-tww} that $\utww(G)\leq F(\ugn(G))$ and $\ugn(G)\leq \Gamma(\utww(G))$.

\paragraph{Quasirandom groups.} Recall that, for a nontrivial finite group $G$, we define its \defin{quasirandom degree} and
\defin{minimum faithful complex degree}, respectively, by
\begin{align*}
D_{\mathrm{qr}}(G)
&:=\min\bigl\{\dim_{\mathbb C}V:
  V\text{ is a nontrivial irreducible complex }G\text{-module}\bigr\},\\
D_{\mathrm f}(G)
&:=\min\bigl\{\dim_{\mathbb C}V:
  G\hookrightarrow\GL(V)\bigr\}.
\end{align*}
We call a finite group \defin{$D$-quasirandom} if $D_{\mathrm{qr}}(G)\geq D$.

Quasirandom groups were introduced by Gowers \cite{Gowers} as a means to tackle a question of Babai and S\'os about whether every group contains a product-free subset of linear size. The answer is negative. The offered counterexamples are quasirandom, namely the groups $PSL_2(q)$. We isolate some particularly useful lemmata.

\begin{lem}{\rm \cite[Lemma~5.1 and the paragraph immediately following it]{Gowers}}\label{known:gowers-mixing}
Let $G$ be a finite group.  If $A,B,C\subseteq G$ contain
no solution $ab=c,a\in A, b\in B$ and $c\in C$,
then
\[
  |A|\,|B|\,|C|\leq \frac{|G|^3}{D_{qr}(G)}.
\]
\end{lem}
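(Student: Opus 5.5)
Although this is Gowers' Lemma 5.1, I would reprove it by representation theory: count the solutions of $ab=c$ via a convolution, split off the average of one indicator function, and control the fluctuation by the fact that no nontrivial irreducible representation has dimension below $D:=D_{\mathrm{qr}}(G)$. Throughout, functions $u\colon G\to\mathbb C$ carry the unnormalized norm $\|u\|_2^2=\sum_x|u(x)|^2$. Convolution is $(u*v)(x)=\sum_{y}u(y)v(y^{-1}x)$. For an irreducible representation $\rho$, the Fourier coefficient is $\hat u(\rho)=\sum_x u(x)\rho(x)$, so that $\widehat{u*v}(\rho)=\hat u(\rho)\hat v(\rho)$. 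Plancherel reads $\|u\|_2^2=\frac1{|G|}\sum_\rho d_\rho\|\hat u(\rho)\|_{HS}^2$.

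\emph{Step 1 (spectral gap).} I will show that if $h$ has mean zero, then for every $f$
\[
\|f*h\|_2^2\leq \frac{|G|}{D}\,\|f\|_2^2\,\|h\|_2^2 .
\]
Since $\sum_x h(x)=0$, the trivial coefficient $\hat h(\mathrm{triv})$ vanishes, so only nontrivial $\rho$ (with $d_\rho\geq D$) contribute. Plancherel applied to $f$ gives, for each single $\rho$, $d_\rho\|\hat f(\rho)\|_{HS}^2\leq |G|\,\|f\|_2^2$, hence $\|\hat f(\rho)\|_{op}^2\leq\|\hat f(\rho)\|_{HS}^2\leq |G|\|f\|_2^2/D$. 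Then
\[
\|f*h\|_2^2=\frac1{|G|}\sum_{\rho\neq\mathrm{triv}}d_\rho\|\hat f(\rho)\hat h(\rho)\|_{HS}^2\leq \frac{\|f\|_2^2}{D}\sum_\rho d_\rho\|\hat h(\rho)\|_{HS}^2=\frac{|G|}{D}\|f\|_2^2\|h\|_2^2 .
\]
This is the only step with real content, and it is where quasirandomness enters. The key point is to apply the dimension bound to the operator norm of $\hat f(\rho)$, not to $\hat h$.

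\emph{Step 2 (counting).} The number of pairs $(a,b)\in A\times B$ with $ab\in C$ is $\langle 1_A*1_B,1_C\rangle$. I write $1_B=\frac{|B|}{|G|}\mathbf 1+h$, where $h$ has mean zero and $\|h\|_2^2\leq|B|$. Since $1_A*\mathbf 1=|A|\mathbf 1$, the count equals
\[
\frac{|A||B||C|}{|G|}+\langle 1_A*h,1_C\rangle .
\]
By Cauchy--Schwarz and Step 1, the error term is at most $\|1_A*h\|_2\sqrt{|C|}\leq\sqrt{|G||A||B||C|/D}$.

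\emph{Step 3 (conclusion).} If there are no solutions, the count is $0$, so $\frac{|A||B||C|}{|G|}\leq\sqrt{|G||A||B||C|/D}$. Squaring and rearranging gives $|A||B||C|\leq |G|^3/D$, as claimed. If one of the sets is empty the inequality is trivial. Steps 2 and 3 are routine bookkeeping; all the difficulty lies in the operator-norm estimate of Step 1.
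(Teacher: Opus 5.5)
Your proof is correct. The paper gives no argument of its own and simply quotes the bound from Gowers, and your proof is that same spectral argument written in Fourier-analytic language. Gowers bounds the second singular value of convolution by $1_A$ by noting that every eigenspace of $MM^{*}$ orthogonal to the constants is a nontrivial $G$-module of dimension at least $D_{\mathrm{qr}}(G)$, then comparing with the trace $|G||A|$; your estimate $d_\rho\|\hat f(\rho)\|_{\mathrm{op}}^2\leq |G|\,\|f\|_2^2$ is exactly this multiplicity-versus-trace step, and your Steps 2--3 are the usual mixing-lemma count, giving the same constant $|G|^3/D_{\mathrm{qr}}(G)$.
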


\begin{lem}{\rm \cite[Theorem 4.7]{Gowers}}\label{known:gowers-representation}
    Let $G$ be a finite, simple, nonabelian group. Then \[D_{qr}(G)\geq\frac{\sqrt{\log|G|}}{2}.\] 
\end{lem}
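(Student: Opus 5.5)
The plan is to realise $G$ as a finite subgroup of the unitary group of a small dimension $k$, and then run a volume-packing argument in matrix space. The input from simplicity is that a nontrivial representation is automatically faithful. The input from nonabelianness is that a suitably small element near the identity would generate an abelian normal subgroup. Concretely, let $\rho\colon G\to\GL(V)$ be a nontrivial irreducible complex representation with $k:=\dim_{\mathbb C}V=D_{\mathrm{qr}}(G)$. Since $\ker\rho$ is a proper normal subgroup of the simple group $G$, $\rho$ is faithful. Averaging an inner product over $G$ lets us assume that $\rho(G)\le U(k)$. We measure distances with the operator norm $\|\cdot\|$, which is invariant under left and right multiplication by unitaries.

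\textbf{Step 1: a commutator estimate.} For unitaries $g,h$, write $g=I+a$, $h=I+b$. The standard identity $gh-hg=ab-ba$ gives
\[
\|[g,h]-I\|=\|gh-hg\|\leq 2\|g-I\|\,\|h-I\|.
\]

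\textbf{Step 2: no nonidentity element of $\rho(G)$ lies within distance $1/2$ of $I$.} Suppose some do, and pick $g\neq I$ among them minimising $\|g-I\|=:\varepsilon<1/2$. Every conjugate $h=xgx^{-1}$ with $x\in\rho(G)$ satisfies $\|h-I\|=\varepsilon$ by unitary invariance. Step 1 then gives
\[
\|[g,h]-I\|\leq 2\varepsilon^2<\varepsilon.
\]
By minimality of $\varepsilon$, this forces $[g,h]=I$. So $g$ commutes with all of its conjugates. Conjugating this relation shows that any two conjugates of $g$ commute with each other. Hence the normal closure of $g$ is a nontrivial abelian normal subgroup of $G$. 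Since $G$ is simple, that normal closure is all of $G$, so $G$ is abelian, a contradiction.

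\textbf{Step 3: volume count.} By Step 2, distinct elements of $\rho(G)$ are at operator-norm distance at least $1/2$, since $\|g-h\|=\|gh^{-1}-I\|$. Hence the operator-norm balls of radius $1/4$ around the $|G|$ points of $\rho(G)$ are pairwise disjoint. These balls all lie inside the operator-norm ball of radius $5/4$ about $0$ in $M_k(\mathbb C)\cong\mathbb R^{2k^2}$. All these balls are balls of the same norm, so their volumes scale by the factor $r^{2k^2}$. Comparing volumes gives
\[
|G|\leq 5^{2k^2}.
\]
Therefore $\log|G|\leq 2k^2\log 5<4k^2$, i.e.\ $k>\sqrt{\log|G|}/2$, as claimed.

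I expect the main obstacle to be Step 3, specifically getting the constant without a logarithmic loss. A naive grid covering of the entries, or a Hilbert--Schmidt packing, costs an extra $\log k$ factor. The fix is to pack balls of the operator norm itself inside the operator-norm ball, so that the ratio is exactly $5^{2k^2}$. Step 2 is routine once we choose the nonidentity element closest to $I$ and use conjugation-invariance of the distance to $I$.
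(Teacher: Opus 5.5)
Your argument is correct and is essentially Gowers' proof of the cited Theorem~4.7; the paper itself gives no proof and only cites it. Both realise $G$ faithfully inside $U(k)$, use the commutator estimate $\|[g,h]-I\|\le 2\|g-I\|\,\|h-I\|$ with a closest-to-identity element to show $\rho(G)$ is $1/2$-separated in operator norm, and pack operator-norm balls to get $|G|\le 5^{2k^2}$. The one point to make explicit is that your final step $2\log 5<4$ uses the natural logarithm; with $\log_2$ the same bound gives only the slightly weaker $k\geq\sqrt{\log_2|G|/(2\log_2 5)}$, so the stated constant $1/2$ is valid for the natural-log convention.
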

For more details, we direct the reader to the original paper by Gowers \cite{Gowers}.
%%%%%%%%%%%%%%%%%%%%%%%%%%%%%%%%%%%%%%%%%%%%%%%%%%%%%%%%%%%%%%%%%%%%%%%%%%%%%%%%%%%%%%%%%%%%%%%%%%%

\section{Finite groups}\label{sec3}

In this section we prove that finite groups have unbounded uniform twin-width and we quantify this result with respect to $N:=|G|$, $D_{qr}:=D_{qr}(G)$, $D_{f}:=D_f(G)$.

\begin{thm}\label{thm:transitions}
Let $G=P_1\sqcup\cdots\sqcup P_k$ be a partition satisfying
$|P_i|\geq \frac{N}{2k}$ for every $i\in[k]$.
Then we have
\[
 \left|
 \left\{g\in G:
 P_i g\cap P_j\neq\varnothing
 \text{ for every }i,j\in[k]
 \right\}
 \right|
 \geq N\left(1-\frac{4k^4}D_{\hspace{-2mm}qr}\right).
\]
\end{thm}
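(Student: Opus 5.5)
We count the \emph{bad} elements $g$, meaning those for which some pair $(i,j)$ has $P_i g\cap P_j=\varnothing$, and aim to show there are at most $4k^4N/D_{qr}$ of them. Since the number of pairs is $k^2$, it suffices to show that for each fixed pair $(i,j)\in[k]^2$ the set
\[
 B_{ij}:=\{g\in G: P_i g\cap P_j=\varnothing\}
\]
satisfies $|B_{ij}|\leq 4k^2N/D_{qr}$. A union bound over the $k^2$ pairs then gives at most $4k^4N/D_{qr}$ bad elements, and so at least $N(1-4k^4/D_{qr})$ good ones, as claimed.

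To bound $|B_{ij}|$ we apply Gowers' mixing lemma (\cref{known:gowers-mixing}) with $A=P_i$, $B=B_{ij}$ and $C=P_j$. Suppose there were a solution $ab=c$ with $a\in P_i$, $b\in B_{ij}$ and $c\in P_j$. Then $c=ab\in P_i b\cap P_j$, which contradicts $b\in B_{ij}$. So the triple contains no solution, and the lemma gives
\[
 |P_i|\,|B_{ij}|\,|P_j|\leq \frac{N^3}{D_{qr}}.
\]
The hypothesis $|P_i|,|P_j|\geq N/(2k)$ gives $|P_i|\,|P_j|\geq N^2/(4k^2)$, and therefore $|B_{ij}|\leq 4k^2N/D_{qr}$.

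No step here is a real obstacle. The only point needing care is that the convention $ab=c$ in \cref{known:gowers-mixing} matches the right multiplication $P_i g$. It does, because we take the first factor $a$ from $P_i$ and the second factor $b$ to be the element $g$. The case $i=j$ needs no separate treatment, and when $B_{ij}$ is empty the bound holds trivially.
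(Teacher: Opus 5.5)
Your proof is correct and follows the paper's argument: you bound each bad set $\{g : P_i g\cap P_j=\varnothing\}$ by Gowers' mixing lemma together with $|P_i|\,|P_j|\geq N^2/(4k^2)$, then take a union bound over the $k^2$ pairs. The only cosmetic difference is the placement of the sets: the paper writes the bad set as $G\setminus P_i^{-1}P_j$ and puts it in the $C$ slot with $A=P_i^{-1}$, $B=P_j$, whereas you put it in the $B$ slot with $A=P_i$, $C=P_j$, which avoids inverses.
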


\begin{proof}
For $i,j\in[k]$, let $E_{ij}\coloneqq \{g\in G:P_i g\cap P_j=\varnothing\}
          =G\sm P_i^{-1}P_j$.
That is, there is no solution $ab=c$ with
$a\in P_i^{-1}$, $b\in P_j$, and $c\in E_{ij}$. 
It follows from~\Cref{known:gowers-mixing} that,
\[
  |E_{ij}|
  \leq \frac{N^3}{D_{qr}|P_i||P_j|}
  \leq \frac{4k^2N}{D_{qr}}.
\]
The union bound over the $k^2$ ordered pairs gives
\[
  \left|\bigcup_{i,j}E_{ij}\right|
  \leq \frac{4k^4N}{D_{qr}}.
\]
The complement is exactly the set in the statement.
\end{proof}

\begin{thm}\label{thm:D-utww}
Every nontrivial finite group $G$ satisfies
\[
  D_{\mathrm{qr}}(G)
  \leq4\bigl(\ugn(G)+1\bigr)^4
  \leq4\bigl(\Gamma(\utww(G))+1\bigr)^4.
\]
Consequently,
$\utww(G)=\Omega(D_{\mathrm{qr}}^{1/4})$.
In particular, every finite nonabelian simple group $G$ satisfies
\[
  \utww(G)=\Omega\bigl((\log N)^{1/8}\bigr).
\]
\end{thm}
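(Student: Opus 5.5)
Fix a total order $<$ on $G$ achieving $\ugn(G)=r$, so $\gn(M_{\gamma(g)})\leq r$ for every $g\in G$. We will exhibit an element $g$ whose matrix has an $(r+1)$-grid whenever $D_{\mathrm{qr}}>4(r+1)^4$, a contradiction. Put $k=r+1$. We may assume $N\geq 2k$; otherwise $k>N/2$, and $D_{\mathrm{qr}}\leq\sqrt N<N\leq 4k^4$ already (every irreducible degree satisfies $d^2\leq N$).

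First, partition $(G,<)$ into $k$ consecutive intervals $P_1<\dots<P_k$, each of size $\lfloor N/k\rfloor$ or $\lceil N/k\rceil$. Since $N\geq 2k$, each part has size at least $N/(2k)$, so the hypothesis of \cref{thm:transitions} holds. That theorem yields at least $N(1-4k^4/D_{\mathrm{qr}})$ elements $g$ with $P_ig\cap P_j\neq\varnothing$ for all $i,j$. If $D_{\mathrm{qr}}>4k^4$, this count is positive, so such a $g$ exists.

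Next, read off the grid. For the right action $\gamma(g)\colon x\mapsto xg$, the matrix $M_{\gamma(g)}$ has rows and columns indexed by $(G,<)$, with a $1$ in position $(x,xg)$. Use the interval partition $P_1,\dots,P_k$ on both rows and columns. The zone $P_i\times P_j$ contains a $1$ exactly when some $x\in P_i$ has $xg\in P_j$, that is, when $P_ig\cap P_j\neq\varnothing$. So we get a $k$-grid, and $\gn(M_{\gamma(g)})\geq r+1$, contradicting the choice of order. Hence $D_{\mathrm{qr}}\leq 4(\ugn(G)+1)^4$. The only care needed is orientation: which of $P_ig$ or $gP_i$ appears depends on the convention for $\gamma$. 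If it is the other one, apply \cref{thm:transitions} with inverses, or use the symmetric version of Gowers' lemma; the bound is unchanged.

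Finally, the second inequality follows from $\ugn(G)\leq\Gamma(\utww(G))$, which is a consequence of \cref{known:grid-tww}. Since $\Gamma(t)\sim 2t$, we have $\Gamma(t)\leq Ct$ for $t\geq 1$, and $\utww(G)\geq 1$ for nontrivial $G$. Rearranging gives $\utww(G)\geq c\,D_{\mathrm{qr}}^{1/4}$. For nonabelian simple $G$, \cref{known:gowers-representation} gives $D_{\mathrm{qr}}\geq\sqrt{\log N}/2$, which yields $\utww(G)=\Omega((\log N)^{1/8})$.

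There is no real obstacle here, since the mixing estimate is already packaged in \cref{thm:transitions}. The steps needing the most care are the bookkeeping for equal-size parts, including the small-$N$ case handled above, and matching the action convention to the zone condition.
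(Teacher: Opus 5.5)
Your proposal is correct and follows the same route as the paper. You fix an order realizing $\ugn(G)$, split it into $k=\ugn(G)+1$ near-equal intervals, and apply \cref{thm:transitions} to obtain a right translation whose matrix has a $k$-grid whenever $D_{\mathrm{qr}}>4k^4$; you then convert the bound via $\Gamma$ and \cref{known:gowers-representation}, exactly as the paper does. Your extra bookkeeping fills in details the paper's short proof leaves implicit: the case $N<2k$, the check $|P_i|\geq N/(2k)$, and the matching of the action convention to the zone condition.
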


\begin{proof}
Let $u=\ugn(G)$ and choose an order on $G$ that realizes $u$.  Put $k=u+1$.
Suppose that $D_{qr}>4k^4$ and consider an order-preserving partition of $G$ into $k$ intervals of almost equal size.  
\Cref{thm:transitions} then gives at least one translation
whose matrix has a $k$-grid, contradicting the definition of $u$. Thus we obtain the displayed inequality. 
As $\Gamma(r)\sim2r$, solving for $\utww(G)$ gives the claimed lower bound with respect to $D_{qr}$. The bound with respect to $N$ for simple, nonabelian groups comes from \cref{known:gowers-representation}.
\end{proof}

\begin{cor}\label{cor:finite-unbounded}
Uniform twin-width is unbounded over finite groups.
\end{cor}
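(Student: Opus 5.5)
The corollary follows from \cref{thm:D-utww} once we exhibit finite groups with arbitrarily large quasirandom degree. The plan is to take the family of finite nonabelian simple groups of unbounded order, for instance the alternating groups $\Alt(n)$ with $n\geq 5$, or the groups $\PSL_2(q)$ mentioned after the definition of quasirandomness. For each such $G$ of order $N$, \cref{known:gowers-representation} gives $D_{\mathrm{qr}}(G)\geq \sqrt{\log N}/2$. Then \cref{thm:D-utww} yields
\[
4\bigl(\Gamma(\utww(G))+1\bigr)^4\geq D_{\mathrm{qr}}(G)\geq \frac{\sqrt{\log N}}{2}.
\]

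Suppose, for contradiction, that $\utww(G)\leq d$ for every finite group $G$. Since $\Gamma$ is non-decreasing, the left-hand side is then at most $4(\Gamma(d)+1)^4$, which is a constant. This bounds $\log N$ uniformly over all finite nonabelian simple groups, contradicting the fact that $|\Alt(n)|=n!/2\to\infty$. Hence uniform twin-width is unbounded over finite groups, and more quantitatively $\utww(\Alt(n))=\Omega((\log n!)^{1/8})$.

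I expect no real obstacle here. The only points to check are that there are infinitely many finite nonabelian simple groups, which is classical for $\Alt(n)$ with $n\geq5$, and that $\Gamma$ is non-decreasing. The latter is part of the statement of \cref{known:grid-tww}, so the constant bound on the left-hand side is justified. Alternatively, one could avoid $\Gamma$ entirely by using the first inequality $D_{\mathrm{qr}}\leq 4(\ugn(G)+1)^4$ together with $\ugn(G)\leq\Gamma(\utww(G))$.
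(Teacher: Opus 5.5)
Your argument is correct and is essentially the paper's proof: Theorem~\ref{thm:D-utww}, combined with Gowers' bound $D_{\mathrm{qr}}(G)\geq \sqrt{\log N}/2$ for nonabelian simple groups, forces $\utww(\Alt(n))\to\infty$ because $|\Alt(n)|\to\infty$. One cosmetic slip: your closing ``alternative'' does not actually avoid $\Gamma$, since it still relies on $\ugn(G)\leq\Gamma(\utww(G))$, but this aside does not affect the proof.
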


\begin{proof}
The orders of the nonabelian simple groups $\Alt(n)$ tend to infinity. Hence, \cref{thm:D-utww} implies that $\utww(\Alt(n))\to\infty$.
\end{proof}

There are groups that have polynomial uniform twin-width with respect to their size. 
For example, $D_{qr}(\PSL_2(q))\geq \frac{q-1}{2}$ (see, e.g., \cite[Theorem 3.1]{Gowers}), so $\utww(\PSL_2(q))=\Omega(N^{1/12})$. The upper bound that we obtain for uniform twin-width with respect to $N$ is even more than that. We begin by showing that every nonabelian finite simple group has a faithful permutation representation of degree $O(|S|^{3/7})$.

\begin{lem}\label{lem:permurep}
For each nonabelian finite simple group $S$, we have
$\mu(S)=O(|S|^{3/7})$, hence $\utww(S)=O(|S|^{3/14})$.
\end{lem}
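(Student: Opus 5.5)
The plan is to reduce the claim to the classification of finite simple groups and bound $\mu(S)$ family by family. Once $\mu(S)=O(|S|^{3/7})$ is known, \cref{lem:permubound} immediately gives $\utww(S)\leq\lceil\sqrt{\mu(S)}\rceil=O(|S|^{3/14})$. So the whole task is the permutation-degree bound. For any proper subgroup $H<S$, the coset action is faithful because $S$ is simple. Hence $\mu(S)\leq |S:H|$ for every proper subgroup $H$, and it suffices, for each $S$, to exhibit a subgroup $H$ with $|H|\geq c|S|^{4/7}$. In particular a maximal parabolic subgroup of large order will do.

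I will treat the families as follows.

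\emph{Alternating groups} $\Alt(n)$, $n\geq 5$: take the natural action. Then $\mu=n$, while $|S|=n!/2$ grows superexponentially, so $n=O(|S|^{3/7})$ trivially.

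\emph{Sporadic groups} (26 of them): these form a finite list and are absorbed into the constant.

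\emph{Groups of Lie type} over $\mathbb F_q$: I use the known minimal degrees, for instance from Cooperstein's or Vasilyev's tables, or else simply the index of a maximal parabolic subgroup. For $\PSL_2(q)$ we have $|S|\approx q^3/\gcd(2,q-1)$ and $\mu=q+1$ (for $q\geq 12$). This gives $\mu\approx |S|^{1/3}$. For $\PSL_n(q)$ we have $\mu=(q^n-1)/(q-1)\approx q^{n-1}$, while $|S|\approx q^{n^2-1}$. The exponent ratio $(n-1)/(n^2-1)=1/(n+1)$ is small. For the Suzuki groups ${}^2B_2(q)$ we have $|S|\approx q^5$ and $\mu=q^2+1$, giving ratio $2/5$. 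For the Ree groups ${}^2G_2(q)$ we have $|S|\approx q^7$ and $\mu=q^3+1$, giving ratio $3/7$. For $\mathrm{PSU}_3(q)$ we have $|S|\approx q^8$ and $\mu=q^3+1$, giving ratio $3/8$. For ${}^3D_4(q)$ we have $|S|\approx q^{28}$ and a parabolic index of about $q^{9}$ to $q^{11}$, well below $3/7$. The general principle is that the index of a maximal parabolic is about $q^{\dim(G/P)}$, and $\dim(G/P)$ is at most about a third of $\dim G$ in all rank-$\geq 2$ and most rank-$1$ cases.

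The main obstacle is exactly this case check over all Lie-type families, with the worst exponent identified. The answer $3/7$ suggests the extremal family is the small Ree groups ${}^2G_2(q)$, where the Borel subgroup has order $q^3(q-1)$ and index $q^3+1$ in a group of order $q^3(q^3+1)(q-1)$. I will tabulate, for each family, the ratio $\log\mu(S)/\log|S|$ using leading terms in $q$. I will show that this ratio is $\leq 3/7+o(1)$, and in fact $\leq 3/7$ up to a multiplicative constant, since the polynomial leading coefficients are bounded. Classical groups of large rank $n$ have ratio $O(1/n)$. The exceptional groups $E_8,E_7,E_6,F_4,G_2,{}^2E_6,{}^2F_4$ all have ratios comfortably below $3/7$; for example $G_2$ gives $5/14$ and ${}^2F_4$ gives $11/26$. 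Only finitely many small-$q$ exceptions, such as $\PSL_2(q)$ with $q\leq 11$, fall outside the generic formulas, and they are absorbed into the constant. Combining all families yields $\mu(S)=O(|S|^{3/7})$, and \cref{lem:permubound} finishes the proof.
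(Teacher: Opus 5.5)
Your proposal is correct and follows essentially the same route as the paper. Both arguments split by the classification: the natural action for $\Alt(n)$, known minimal degrees compared against order formulas for the Lie-type families (the paper cites the table of Guest--Morris--Praeger--Spiga), and then \cref{lem:permubound}. Your explicit identification of the small Ree groups ${}^2G_2(q)$ as the extremal case, with exponent exactly $3/7$, makes precise what the paper leaves implicit. There is one minor slip: for ${}^2F_4(q)$ the minimal degree is $(q^6+1)(q^3+1)(q+1)\approx q^{10}$, giving ratio $5/13$. Your $11/26$ comes from the other maximal parabolic, so it is still a valid upper bound and the argument is unaffected.
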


\begin{proof}

We distinguish cases regarding $S$ (as we only care about asymptotics, we focus on the infinite families). If $S=\Alt(r)$, the natural action of $S$ on $\{1,\dots,r\}$ gives $\mu(S)\leq r$, so $\mu(S)=o(\log |S|)=O(|S|^{3/7})$. For the finite simple groups of Lie type, the values of $\mu(S)$ are listed in \cite[Section~3, Table~4]{GuestMorrisPraegerSpiga}.
Comparison with the corresponding order formulas gives again
$\mu(S)=O(|S|^{3/7})$. The second bound follows from \cref{lem:permubound}. 
\end{proof}

As we will see, we can extend the second bound to all finite groups. In fact, we can also get an upper bound on uniform twin-width as a function of $D_f$. We will first require some additional lemmata.

\begin{lem}\label{lem:simple_subgroup}
    If $H$ is a subgroup of a finite, simple, nonabelian group $S$ such that $[S:H]$ is minimal, then $\mu(S)=[S:H]$.
\end{lem}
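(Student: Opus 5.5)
We prove the two inequalities $\mu(S)\leq [S:H]$ and $[S:H]\leq\mu(S)$ separately, using only simplicity of $S$ and the orbit–stabilizer theorem.

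For $\mu(S)\leq[S:H]$, consider the action of $S$ by left multiplication on the coset space $S/H$. This gives a homomorphism $\phi\colon S\to\Sym([S:H])$. Its kernel is the core $\bigcap_{g\in S} gHg^{-1}$, which is a normal subgroup of $S$ contained in $H$.

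\begin{itemize}
\item If $H\neq S$, then the core is a proper normal subgroup, so by simplicity it is trivial and $\phi$ is faithful.
\item The case $H=S$ must be excluded. Since $S$ is nonabelian, it is nontrivial, so the index-one action is not faithful.
\end{itemize}

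So we interpret the statement with $H$ ranging over \emph{proper} subgroups, where $[S:H]$ is minimal among these. Under that reading, $\phi$ is a faithful permutation representation of degree $[S:H]$, and hence $\mu(S)\leq[S:H]$.

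For $[S:H]\leq\mu(S)$, let $\psi\colon S\to\Sym(n)$ be faithful with $n=\mu(S)$. Since $S$ is nontrivial and $\psi$ is faithful, $\psi(S)$ moves some point, so there is an orbit $O$ of size at least $2$. Let $x\in O$ and let $K$ be its stabilizer. By orbit–stabilizer, $|O|=[S:K]$. Because $|O|\geq 2$, $K$ is a proper subgroup of $S$. By minimality of $[S:H]$ among proper subgroups,
\[
[S:H]\leq [S:K]=|O|\leq n=\mu(S).
\]

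Combining the two inequalities gives $\mu(S)=[S:H]$.

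The only subtle point is the degenerate case $H=S$, which must be excluded in the statement. The second inequality also holds without simplicity; only the faithfulness step in the first direction uses it.
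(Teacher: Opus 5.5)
Your proof is correct and follows the paper's argument essentially step for step: the coset action on $S/H$ is faithful because its kernel, the core of $H$, is a proper normal subgroup of a simple group, and conversely a faithful action has a nontrivial orbit whose stabilizer is proper, which gives the reverse inequality by orbit--stabilizer. Your insistence that $H$ range over proper subgroups matches the paper, which takes the minimum over $H<S$.
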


\begin{proof}
First observe that $\mu(S)=\min_{H<S}[S:H]$.
Indeed, for every proper subgroup $H<S$, the kernel of the coset action on $S/H$ is the core
\[
  \operatorname{core}_S(H)=\bigcap_{s\in S}sHs^{-1},
\]
which is a proper normal subgroup of the simple group $S$, and hence is trivial. Conversely, if $S$ acts faithfully on a finite set $\Omega$, then it has a nontrivial orbit $S\omega$. Its stabilizer $S_\omega$ is proper, and therefore
\[
  |\Omega|\geq|S\omega|=[S:S_\omega]
  \geq\min_{H<S}[S:H].\qedhere
\]
\end{proof}

\begin{lem}[{Nikolov--Pyber,\text{ }\cite{nikolov2011product}}]\label{known:NP-permutation}
    Let $G$ be a finite linear group of degree $k$ over $\C$. Then $G$ has a permutation representation of degree at most $c_0k^2$ with abelian kernel for some absolute constant $c_0$. 
\end{lem}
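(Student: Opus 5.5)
This lemma is quoted from \cite{nikolov2011product}, and in the paper we would simply cite it. If I had to prove it, I would argue by induction on the degree $k$, reducing to the primitive case. I would use Clifford theory for the reductions and the classification of finite simple groups for the primitive case. Throughout, I would prove the stronger statement for every finite subgroup $G\leq \GL_k(\mathbb C)$, with $c_0$ to be fixed at the end.

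\textbf{Reducible case.} By Maschke's theorem, if $V=\mathbb C^k$ is reducible then $V=V_1\oplus V_2$ with $\dim V_i=k_i\geq 1$ and $k_1+k_2=k$. By induction, the image $G_i$ of $G$ in $\GL(V_i)$ has a permutation representation on a set $\Omega_i$, with $|\Omega_i|\leq c_0k_i^2$ and abelian kernel $A_i$. Let $G$ act on $\Omega_1\sqcup\Omega_2$. Its kernel $K$ maps into $A_1\times A_2$, and this map is injective because $G$ acts faithfully on $V$. Hence $K$ is abelian, and the degree is $c_0(k_1^2+k_2^2)\leq c_0k^2$.

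\textbf{Imprimitive case.} Suppose $V=W_1\oplus\cdots\oplus W_m$ with $m\geq 2$ and the $W_i$ permuted transitively by $G$. Let $H$ be the stabiliser of $W_1$; it has index $m$ and acts on $W_1$, which has dimension $k/m$. By induction, the image of $H$ in $\GL(W_1)$ acts on a set $\Omega$ with $|\Omega|\leq c_0(k/m)^2$ and abelian kernel. Let $G$ act on the induced set $G\times_H\Omega$, which has size $m|\Omega|\leq c_0k^2/m$. A kernel element fixes every block, so it lies in every conjugate of $H$. Restricted to each $W_i$ it lies in the corresponding abelian kernel, so as in the reducible case the kernel embeds in a product of abelian groups. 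The bound $c_0k^2/m\leq c_0k^2$ leaves plenty of room.

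\textbf{Primitive case.} Now assume $V$ is irreducible and primitive. By Clifford's theorem, every abelian normal subgroup of $G$ is scalar, hence central. Let $Z=Z(G)$ and study $F^*(G)$. Its non-central part consists of a "symplectic-type" $p$-group part $E$, with $E/Z(E)$ elementary abelian of rank $2a$ and $p^a\mid k$, together with quasisimple components $L_1,\dots,L_t$. The representation $V$ restricted to $F^*(G)$ is a tensor product of faithful representations of the pieces, with degrees $d_0d_1\cdots d_t\leq k$. Conjugation gives $G/Z$ an action on $E/Z(E)$ and on the components.
\begin{itemize}
\item For $E/Z(E)$, I would let $G$ act on the points of this $\mathbb F_p$-space. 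That gives degree $p^{2a}\leq k^2$.
\item For each $L_j$, I would use the CFSG-based fact that a quasisimple group with a faithful complex representation of degree $d_j$ has a faithful permutation representation of $L_j/Z(L_j)$ of degree $O(d_j^2)$. This uses the Landazuri--Seitz and Seitz--Zalesskii lower bounds for groups of Lie type, and the fact that $d_j\geq n-1$ for $\Alt(n)$. I would then let $G$ act on the disjoint union, over its orbit on the components, of these sets.
\end{itemize}
The kernel of the combined action centralises $F^*(G)$ modulo $Z$. A standard argument then puts it in $Z$, which is abelian. The total degree is at most $\sum_j O(d_j^2)+k^2$. Since $\prod_j d_j\leq k$ and each $d_j\geq 2$, this sum is $O(k^2)$.

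\textbf{Main obstacle.} I expect the primitive case to be the hard part, specifically the claim that every quasisimple group admits a permutation representation of degree quadratic in its minimal faithful complex degree. This cannot be avoided, because Jordan-type bounds only give an abelian normal subgroup of index about $(k+1)!$, which is far too large. It requires a case-by-case check over the finite simple groups against the known minimal degrees. A second, smaller subtlety is checking that the kernel of the combined action really is central in the primitive case.
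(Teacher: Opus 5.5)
The paper gives no proof of this lemma; it only cites Nikolov--Pyber, so your sketch has to stand on its own. Your reducible and imprimitive reductions are sound. The gap is in the primitive case, at the step you called a ``smaller subtlety'': the kernel $K$ of your combined action is not abelian. You let $G$ act by conjugation on the points of $E/Z(E)$. Since $[E,E]\leq Z(E)$, the group $E$ acts trivially there. Since $[F(G),E(G)]=1$, it also acts trivially on anything built from the conjugation action on the components. Hence $E\leq K$, and $E$ is nonabelian as soon as it is nontrivial. Your ``standard argument'' only uses $[K,F^*(G)]\leq Z$, which $E$ itself satisfies, so it cannot force $K\leq Z$. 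A concrete instance is $\mathrm{SL}_2(3)\leq\mathrm{GL}_2(\mathbb{C})$. It is primitive, since it has no subgroup of index $2$ and its only nontrivial abelian normal subgroup is $\{\pm1\}$. Here $E=Q_8$ and there are no components, yet the conjugation action on the four points of $Q_8/Z(Q_8)$ has kernel $Q_8$.

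What you need is an action on $O(k^2)$ points in which $E$ acts with kernel $Z(E)$. Then $K$ centralises $E$ and every $L_j$, hence all of $F^*(G)$, and so lies in the abelian group $Z(F^*(G))$ because $C_G(F^*(G))\leq F^*(G)$. Conjugating the elements of $E$ achieves this but costs $p^{2a+1}$. That is of order $k^3$ for $p^{1+2}.\mathrm{SL}_2(p)$ acting on $\mathbb{C}^p$.

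For odd $p$, primitivity forces $\Omega_1(O_p(G))$ to be extraspecial of exponent $p$, and then $\mathrm{Aut}(E)$ splits over $\mathrm{Inn}(E)$. This gives an affine action on $p^{2a}$ points in which $E$ acts by translations. For $p=2$ that splitting can fail, but conjugation on the characteristic subgroup $\Omega_2(O_2(G))$ has degree at most $4\cdot 2^{2a}$ and works. Since $F(G)$ may involve several primes, this has to be done prime by prime, using $\prod_p p^{a_p}\mid k$.

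There is a second, smaller gap in the component step. A faithful permutation set for $L_j/Z(L_j)$ is not yet a $G$-set, because $N_G(L_j)$ acts on $L_j$ through possibly outer automorphisms. You need a faithful action, of degree $O(d_j^2)$, of the image of $N_G(L_j)$ in $\mathrm{Aut}(L_j/Z(L_j))$, and then you must induce it over the $G$-orbit of components. Such an action exists because $\mathrm{Aut}$ fuses the class of a minimal-index subgroup with only boundedly many others, but this has to be argued. So your own diagnosis of the ``main obstacle'' is off: the quasisimple degree comparison is routine given CFSG, whereas the extraspecial part is where your construction actually breaks.
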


We are now ready to prove upper bounds for the uniform twin-width of finite groups. 

\begin{thm}\label{thm:upperbound}
For every finite group
$G$, we have $\operatorname{utww}(G)=O(D_f)$ and
$\operatorname{utww}(G)=O(N^{3/14})$.
\end{thm}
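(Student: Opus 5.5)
We prove the two bounds separately, in both cases reducing to permutation representations via \cref{lem:permubound} and the extension bound \cref{lem:extension}.

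\emph{The bound $\utww(G)=O(D_f)$.} Let $k=D_f(G)$, so $G$ embeds in $\GL_k(\mathbb C)$ and is a finite linear group of degree $k$. By \cref{known:NP-permutation}, there is a homomorphism $\phi\colon G\to\Sym(m)$ with $m\leq c_0k^2$ whose kernel $A$ is abelian. Then $G/A$ acts faithfully on $m$ points, so $\mu(G/A)\leq m$ (if $G/A$ is trivial we simply ignore this factor). By \cref{lem:permubound}, $\utww(G/A)\leq\lceil\sqrt{c_0}\,k\rceil$. By \cref{lem:abelian}, $\utww(A)\leq 2$. Applying \cref{lem:extension} to $A\triangleleft G$ gives
\[
\utww(G)\leq\max\{2,\lceil\sqrt{c_0}k\rceil\}=O(D_f).
\]

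\emph{The bound $\utww(G)=O(N^{3/14})$.} Induct on $N$ along a composition series. If $G$ is not simple, choose a nontrivial proper normal subgroup $M$. By \cref{lem:extension} and induction,
\[
\utww(G)\leq\max\{\utww(M),\utww(G/M)\}\leq C\max\{|M|,|G/M|\}^{3/14}\leq CN^{3/14}.
\]
This closes the induction as long as $C$ is a fixed constant that works for the simple factors. Abelian simple groups have $\utww=2$ by \cref{lem:abelian}, and nonabelian simple groups satisfy $\utww(S)\leq C'|S|^{3/14}$ by \cref{lem:permurep}. The finitely many sporadic groups, which \cref{lem:permurep} only treats implicitly, are absorbed into the constant. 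Taking $C=\max\{2,C'\}$ therefore works uniformly, and the trivial group is harmless. Equivalently, $\utww(G)$ is at most the maximum of $\utww$ over the composition factors, each of which has order at most $N$.

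\emph{Main obstacle.} The delicate point is making sure the constants do not compound. In the first part there is only one extension step, so this is automatic. In the second part, \cref{lem:extension} gives a maximum rather than a sum, so the constant never grows along the induction. The real content is already contained in \cref{lem:permurep} and \cref{known:NP-permutation}.
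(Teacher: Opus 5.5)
Your proposal is correct and follows the paper's proof essentially verbatim. The $O(D_f)$ bound combines the Nikolov--Pyber permutation representation with abelian kernel (\cref{known:NP-permutation}) with \cref{lem:permubound}, \cref{lem:abelian} and \cref{lem:extension}, and the $O(N^{3/14})$ bound reduces to composition factors via \cref{lem:extension} and then applies \cref{lem:abelian} or \cref{lem:permurep}; your induction on $N$ is just a repackaging of the paper's iteration along a composition series.
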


\begin{proof}
We first apply \cref{known:NP-permutation} to $G$ and we obtain a homomorphism $G\rightarrow \Sym(m)$ for some $m=O(D_{f}^2)$. Then $\phi:\utww(G/ker(\phi))\leq\sqrt{m}=O(D_f)$ by \cref{lem:gridbound}, and $\utww(\ker(\phi))=2$ by \cref{lem:abelian}. By \cref{lem:extension}, we obtain the first inequality.

Now let $1=G_{0}\triangleleft G_{1}\triangleleft\cdots
    \triangleleft G_{r}=G$
be a composition series, and put $S_i=G_i/G_{i-1}$.  
Iterating \cref{lem:extension} gives
\[
  \operatorname{utww}(G)
     \leq \max_{1\leq i\leq r}\operatorname{utww}(S_i).
\]

If $S_i$ is cyclic of prime order, then by \cref{lem:abelian} we have that
$\operatorname{utww}(S_i)=2$.  Otherwise, $S_i$ is nonabelian simple, so $\utww(S_i)=O(|S_i|^{3/14})$ by \cref{lem:permurep}. As $|S_i|\leq N$, we get the second inequality.
\end{proof}

It is important to note that, for arbitrary finite groups, the two representation parameters $D_{qr}$ and $D_f$ cannot be conflated, and in particular there does not exist an upper bound of $\utww(G)$ with respect to $D_{qr}$. For example,
$D_{\mathrm{qr}}(\Alt(n)\times C_2)=1$,
whereas
\[
  \utww(\Alt(n)\times C_2)
  \geq\utww(\Alt(n))\longrightarrow\infty
\]
by \cref{known:subgroup,cor:finite-unbounded}. The polynomial equivalence of uniform twin-width and quasirandom degree is restricted to finite simple groups.

For the various bounds that we produced in this section, it would be interesting to find whether they can be attained, especially for the class of nonabelian finite simple groups. Specifically, regarding the asymptotics with respect to $N$, it is easy to see for the alternating groups $\Alt(n)$ that $\utww(\Alt(n))=O(D_{qr})=O\left(\frac{\log N}{\log\log N}\right)$, which however is still far from the bound $\Omega\left((\log N)^{1/8}\right)$ of \cref{thm:D-utww}. As for the bound $O(N^{3/14})$ of \cref{thm:upperbound}, a result of Barbieri and Sabatini {\rm\cite[Corollary C]{barbieri2024quasirandom}} offers hope that we may even find witnesses for tightness among general quasisimple groups.   

\begin{comment}
\begin{itemize}
    \item $\utww(G)=\Omega\left((\log N)^{1/8}\right)$: unknown. However, it is certainly not very loose. The groups $\Alt(n)$ are not of prime order, so, as explained in the first item of this list, $\utww(\Alt(n))=\Theta(D^{1/4})$. 
    \item $\utww(G)=O(N^{3/14})$: the bound $\mu(S)=O(N^{3/7})$ of \cref{lem:permurep} is sharp, but the derived bound $\utww(G)=O(N^{3/14})$ may not be. However, 
\end{itemize}
\end{comment}

%%%%%%%%%%%%%%%%%%%%%%%%%%%%%%%%%%%%%%%%%%%%%%%%%%%%%%%%%%%%%%%%%%%%%%%%%%%%%%%%%%%%%%%%%%%%%%%%%%%%
\section{Houghton groups}\label{sec4}

For $n\geq2$, let $S_n$ be the infinite star with vertex set
$V(S_n)=\{0\}\cup\{(i,m):i\in[n],\ m\in\mathbb N_{\geq1}\}$,
where $0$ is adjacent to each $(i,1)$ and $(i,m)$ is adjacent to $(i,m+1)$. Let $d_{S_n}$ denote its graph metric and let $\Sym(S_n)$ be the full permutation group of $V(S_n)$. Define the \defin{wobbling group}
\[
  W(S_n):=
  \left\{\sigma\in\Sym(S_n):
  \|\sigma\|_w:=
  \sup_{v\in V(S_n)}d_{S_n}(\sigma(v),v)<\infty\right\}.
\]
For $n=2$, the graph $S_2$ is a double ray, which we identify with $\mathbb Z$; thus $W(S_2)$ is the classical wobbling group $W(\mathbb Z)$.

We also define $\FSym(S_n)$ to be the group of finitely supported permutations of $V(S_n)$. Finally, we define the \defin{Houghton group} $H_n$, $n\geq 2$. For $n=2$, it is the \defin{lampshuffler group} $L:=\FSym(\Z)\rtimes\langle t\rangle \leq W(\Z)$, where $t(\ell):=\ell+1$. This group has a generating set of size $2$, namely a transposition and a translation by $1$. For $n\geq 3$, the Houghton group $H_n$ is the subgroup of $\Sym(S_n)$ (and supergroup of $\FSym(S_n)$) generated by elements $g_2,\dots,g_n$ that act as follows: $g_k$ translates all the vertices in $V(R_1)\cup V(R_k)$ by $1$ towards the end of $R_1$. Formally,  \[g_k\cdot (1,m)=(1,m+1),\qquad g_k\cdot 0=(1,1),\qquad g_k\cdot (k,1)=0,\] \[g_k\cdot (k,m)=(k,m-1) \text{ (for $m\geq 2$) },\qquad g_k\cdot (i,m)=(i,m)\text{ (for $i\neq 1,k$) }.\]   

\begin{thm}\label{thm:wobbling-examples}
For every $n\geq2$, each of $W(S_n)$, $\FSym(S_n)$, and $H_n$ has finite twin-width and infinite uniform twin-width. For $n\geq3$, the group $H_n$ is finitely presented.
\end{thm}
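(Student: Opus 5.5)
Finite twin-width and infinite uniform twin-width are handled separately, and finite presentability is quoted from the literature. \emph{Finite twin-width.} Since $\FSym(S_n)\le W(S_n)$, $H_n\le W(S_n)$ (each $g_k$ moves every vertex distance at most $1$, and finitely supported permutations have bounded displacement), it suffices to treat $W(S_n)$. Subgroups inherit finiteness of twin-width: restricting the well-ordered faithful action on $V(S_n)$ and applying \cref{thm:finite-shuffle}(a) to the subgroup gives this directly. Fix a well-order on $V(S_n)$ that enumerates vertices by distance from $0$, say $0,(1,1),\dots,(n,1),(1,2),\dots,(n,2),\dots$. For $\sigma\in W(S_n)$ with $\|\sigma\|_w=w$ we show $\dw_<(\sigma)$ is finite.

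\emph{Decreasing width bound.} Split $V(S_n)$ into the finite ball $B$ of radius $w$ around $0$ and the $n$ ray tails $T_i=\{(i,m):m>w\}$. On each tail, $\sigma$ maps $(i,m)$ to $(i,m')$ with $|m-m'|\le w$, since it cannot reach another ray without passing within $w$ of $0$. Hence, in the level order, a decreasing sequence inside one tail has consecutive terms whose levels differ by less than the displacement allows. Concretely, if $x<y$ in $T_i$ with level difference exceeding $2w$, then $\sigma(x)<\sigma(y)$. So a decreasing sequence within $T_i$ occupies a level window of width at most $2w+1$, and each window contains at most $2w+1$ vertices of $T_i$. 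Thus a decreasing sequence meets each $T_i$ in at most $2w+1$ points and meets $B$ in at most $|B|$ points, giving
\[
\dw_<(\sigma)\le |B|+n(2w+1)<\infty .
\]
Alternatively, the same estimate yields a partition into boundedly many increasing sequences, after which \cref{lem:monotone-cover} applies. By \cref{thm:finite-shuffle}(a), $W(S_n)$, and hence each of its subgroups, has finite twin-width.

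\emph{Infinite uniform twin-width.} Every finite group $G$ embeds in $\Sym(m)$ with $m=|G|$ (Cayley), and $\Sym(m)$ embeds into $\FSym(S_n)\le H_n\le W(S_n)$ by permuting $m$ fixed vertices. By \cref{known:subgroup}, $\utww(\FSym(S_n))\ge\utww(\Alt(m))$ for every $m$, and this tends to infinity by \cref{cor:finite-unbounded}. The same bound holds for $H_n$ and $W(S_n)$, since both contain $\FSym(S_n)$. For $n=2$, $H_2=L\supseteq \FSym(\mathbb Z)$, so the argument applies as well (consistent with \cref{known:unbounded}).

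\emph{Finite presentability and main obstacle.} For $n\ge3$, finite presentability of $H_n$ is Brown's theorem \cite{brown1987finiteness}, which we cite. The main technical step is the decreasing-width estimate. The point that needs care is that the chosen well-order interleaves the rays, and the displacement bound must still force increase at large level gaps even though the enumeration also encodes the ray index. Comparing levels rather than raw positions resolves this, since positions in the order are monotone in level up to ties within a level.
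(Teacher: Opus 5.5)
Your proposal is correct and follows the paper's proof essentially step for step: the same interleaved level order on $V(S_n)$, finite decreasing width for bounded-displacement permutations fed into \cref{thm:finite-shuffle}(a) (inherited by subgroups), subgroup monotonicity applied to the alternating subgroups of $\FSym(S_n)$, and Brown's theorem for finite presentability. The only difference is in how the decreasing-width estimate is obtained. The paper transports $\sigma$ to $\mathbb N$ via the order isomorphism and uses $|\iota(a)-\iota(b)|\le n\,d_{S_n}(a,b)+1$ to get $\dw_<(\sigma)\le 2(n\|\sigma\|_w+1)$. You instead split $V(S_n)$ into a ball and $n$ ray tails to get $\dw_<(\sigma)\le |B|+n(2w+1)$, which is equally valid.
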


\begin{proof}
Order $V(S_n)$ by $0\prec(1,1)\prec(2,1)\prec\cdots\prec(n,1)
  \prec(1,2)\prec\cdots$, and let $\iota\colon(V(S_n),\prec)\to(\mathbb N,<)$ be the order isomorphism. For all $a,b\in V(S_n)$, $|\iota(a)-\iota(b)|
  \leq n\,d_{S_n}(a,b)+1$. Fix $\sigma\in W(S_n)$ and put $C=\|\sigma\|_w$. The conjugate permutation $\bar\sigma:=\iota\sigma\iota^{-1}$
of $\mathbb N$ satisfies \[\|\bar\sigma\|_w
  :=\sup_{m\in\mathbb N}|\bar\sigma(m)-m|
  \leq B:=nC+1.\]
If $m_1<\cdots<m_r$ and
  $\bar\sigma(m_1)>\cdots>\bar\sigma(m_r)$,
then $m_r-B
  \leq\bar\sigma(m_r)
  <\bar\sigma(m_1)
  \leq m_1+B$.
Hence, $m_r-m_1<2B$, while $m_r-m_1\geq r-1$, and therefore $r\leq2B$. Thus, every element of $W(S_n)$ has finite decreasing width in the fixed well-order $\prec$. The action is faithful, so \cref{thm:finite-shuffle}(a) shows that $W(S_n)$ has finite twin-width. The same follows similarly for its subgroups $\FSym(S_n)$ and $H_n$, as the action of $W(S_n)$ on $\N$ induces actions of $\FSym(S_n)$ and $H_n$ on $\N$.

Since $V(S_n)$ is countably infinite, $\FSym(S_n)$ contains a copy of $\Alt(m)$ for every $m$. By \cref{cor:finite-unbounded}, their uniform twin-widths are unbounded. Subgroup monotonicity therefore gives $\utww(\FSym(S_n))
  =\utww(H_n)
  =\utww(W(S_n))
  =\infty$.
Finally, Brown proved that $H_n$ is finitely presented for $n\geq3$ \cite{brown1987finiteness}.
\end{proof}

The translation vectors in all $n$ rays define a surjective homomorphism
\[
  \tau\colon H_n\longrightarrow
  \left\{(z_1,\ldots,z_n)\in\mathbb Z^n:
  \sum_{i=1}^n z_i=0\right\}
  \cong\mathbb Z^{n-1}.
\]
Surjectivity follows from the vectors $e_1-e_k$ of the permutations $g_k$, and $\ker\tau=\FSym(S_n)$. Hence
\[
  1\longrightarrow\FSym(S_n)
  \longrightarrow H_n
  \longrightarrow\mathbb Z^{n-1}
  \longrightarrow1
\]
is exact. Since $\FSym(S_n)$ is a directed union of finite groups, it is elementary
amenable; the quotient $\mathbb Z^{n-1}$ is abelian. Hence $H_n$ is elementary
amenable, and in particular amenable.

One might note that we can already easily find a simple group with finite twin-width and infinite uniform twin-width: it is $\FAlt(\Z)$, the group of even permutations of finite support (this is a classical result, see \cite[Theorem~3.2, p.~458]{Hall2006}). However, this group is not even finitely generated. We will next obtain a finitely presented example.  

\section{Thompson's groups}\label{sec5}

Among the Houghton groups $H_n$ with $n\geq2$, the lampshuffler group $H_2$ is the unique one that is not finitely presented \cite{brown1987finiteness}. Thompson's group $V$, on the other hand, is finitely presented, infinite, and simple; see \cite[Section~6]{CFP} and \cite[Theorem~1.3]{BleakQuick}. It is also nonamenable \cite{higman1974finitely}. We prove that it has finite twin-width and infinite uniform twin-width.

Let
\[
  \{0,1\}^{<\mathbb N}:=\bigcup_{n\geq0}\{0,1\}^n,
  \qquad
  \C:=\{0,1\}^{\mathbb N}.
\]
Thus $\C$ is the Cantor space with its product topology. If $u\in\{0,1\}^{<\mathbb N}$ and $z\in\C$, write $uz$ for their concatenation and put $u\C:=\{uz:z\in\C\}$.
A finite set $U=\{u_1,\ldots,u_m\}\subseteq\{0,1\}^{<\mathbb N}$ is a \defin{complete prefix code} if $\C=\bigsqcup_{i=1}^m u_i\C$.
Equivalently, if every $x\in\C$ has a unique expression $x=u_i z$ with $i\in[m]$ and $z\in\C$.

We use the standard prefix-table model of $V$. A prefix table consists of complete prefix codes $U=\{u_1,\ldots,u_m\}$, $W=\{v_1,\ldots,v_m\}$, together with a bijection $u_i\mapsto v_i$. It defines a homeomorphism $g\colon\C\to\C$ by
$g(u_i z)=v_i z$, where $z\in\C$.
The pair $u_i\mapsto v_i$ is a \defin{branch}. 
Replacing one branch by
$u_i0\mapsto v_i0$, $u_i1\mapsto v_i$ is an \defin{expansion} and does not change the represented homeomorphism. Two tables represent the same element if, after reordering branches, they admit a common expansion.

Thompson's group $V$ is the group of homeomorphisms of $\C$
represented by prefix-replacement tables, or equivalently the set of
equivalence classes of such tables.
\begin{lem}
\label{known:V-model}
The equivalence classes of finite complete binary prefix tables under common expansion form Thompson's group $V$.
\end{lem}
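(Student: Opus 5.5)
This is the standard description of $V$, and I will prove it in four steps. First, every prefix table defines a well-defined homeomorphism of $\C$. Second, two tables give the same homeomorphism exactly when they have a common expansion. Third, composition and inversion of tables correspond to composition and inversion of homeomorphisms. Fourth, the resulting group is $V$ as defined in \cite{CFP}.

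Well-definedness is immediate. Since $U$ is a complete prefix code, each $x\in\C$ has a unique decomposition $x=u_iz$, so $g(u_iz)=v_iz$ is a function. It restricts to a homeomorphism $u_i\C\to v_i\C$ on each piece. The pieces $u_i\C$ are clopen, and the pieces $v_i\C$ partition $\C$ because $W$ is a complete prefix code. Hence $g$ is a bijective homeomorphism, with inverse given by the swapped table $v_i\mapsto u_i$.

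Next I check that an expansion does not change the map. For this, the expansion should replace $u_i\mapsto v_i$ by the two branches $u_i0\mapsto v_i0$ and $u_i1\mapsto v_i1$; the statement's second branch reads $v_i$, which I take to be a typo for $v_i1$. So tables with a common expansion represent the same homeomorphism.

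The converse is the main point. Suppose two tables $(U,W)$ and $(U',W')$ induce the same $g$. Expand both until they share the same domain code: the common refinement of $U$ and $U'$, obtained by repeatedly splitting leaves of the finite binary trees. On each cylinder $u\C$ of this refinement, both tables act as $uz\mapsto vz$ and $uz\mapsto v'z$ respectively. Since $vz=v'z$ for all $z$, comparing lengths shows $v=v'$. So after reordering branches the two refined tables coincide, and equality of homeomorphisms is the same as the equivalence relation.

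For composition $hg$, I expand the table of $g$ so that its range code $W$ refines the domain code of $h$'s table. Each $v_i$ then has the form $x_jy$ for some branch $x_j\mapsto y_j$ of $h$. I then expand $h$'s table along these $y$'s so that the two codes match exactly, and compose branchwise. This shows that the set of represented maps is closed under composition; closure under inverses was shown above. So the equivalence classes form a group isomorphic to the group of represented homeomorphisms. That group is Thompson's group $V$ by the standard definition in \cite{CFP}.

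The only delicate step is the converse in the common-expansion argument: showing that the common refinement of two complete prefix codes exists. It does, because prefix codes correspond to finite complete binary trees and the union of two such trees is again one. Everything else is routine.
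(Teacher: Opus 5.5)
Your proof is correct, but it takes a different route from the paper. The paper verifies nothing directly. It observes that a complete prefix code is the leaf set of a finite full binary tree, with each word a root-to-leaf path. A prefix table is therefore a tree-pair diagram with a bijection of leaves, and an expansion adds a caret at a pair of matched leaves. The paper then cites \cite{CFP} for the fact that equivalence classes of such diagrams form $V$.

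You instead prove the needed facts inside the prefix-code model:
\begin{enumerate}[label=(\roman*)]
\item tables define homeomorphisms;
\item expansions preserve the homeomorphism;
\item two tables defining the same homeomorphism have a common expansion, using the union of the two trees as a common refinement of the domain codes and the fact that $vz=v'z$ for all $z\in\C$ forces $v=v'$;
\item composition is realized by refining the range code of $g$ to the domain code of $h$.
\end{enumerate}

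Your approach justifies the paper's tacit claim that the homeomorphism description of $V$ (``the group of homeomorphisms of $\C$ represented by prefix-replacement tables, or equivalently the set of equivalence classes'') agrees with the equivalence-class description. In particular, step (iii) shows that ``admitting a common expansion after reordering branches'' is transitive, hence genuinely an equivalence relation, which the citation leaves to the reader. The paper's route buys brevity. Your final sentence, like the paper's proof, still relies on \cite{CFP} to identify this group with $V$ as classically defined.

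You are also right that the paper's expansion should read $u_i1\mapsto v_i1$. With $u_i1\mapsto v_i$, the range would not even be a prefix code.
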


\begin{proof}
This is the tree-pair description of $V$ in Cannon--Floyd--Parry
\cite[Section~6, pp.~240--248]{CFP}; a finite binary word records the path from the root to a leaf, and expansion corresponds to replacing a leaf by
its two children.  See also Bleak--Quick
\cite[Section~2]{BleakQuick}.
\end{proof}

Let $\C_{00}:=\{x\in\C:x\text{ is eventually }0\}$, i.e., $\C_{00}$ consists of those infinite binary sequences having only finitely many entries equal to $1$.  
In other words, $x\in \C_{00}$ if and only if  $x=u0^\infty$
for some finite binary word $u$, where $0^\infty:=000\cdots$. We define $c(x):=u$. For the exceptional sequence $0^\infty$, which contains no occurrence of $1$, we put $c(0^\infty):=\varepsilon$, the empty word.
We call $c(x)$ the \defin{canonical code} of $x$. We also fix on $\C_{00}$ the \defin{shortlex order}, that is, shorter words are less than longer words, whereas the lexicographic order is used to compare words of equal length. This is a well-order.

\begin{lem}\label{prop:V-shuffle}
If $g\in V$ has a prefix table with $m$ branches, then $\dw_{<}(g|_{\C_{00}})\leq m$.
Consequently, $V$ has finite twin-width.
\end{lem}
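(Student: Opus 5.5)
The plan is to exhibit $g|_{\C_{00}}$ as a union of $m$ increasing pieces and then invoke \cref{lem:monotone-cover} and \cref{thm:finite-shuffle}(a).

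\textbf{The action on $\C_{00}$.} Let $g$ be given by the table $u_i\mapsto v_i$, $i\in[m]$. Then $g(u_iz)=v_iz$, and $u_iz$ is eventually $0$ if and only if $z$ is, which is if and only if $v_iz$ is. So $g$ maps $\C_{00}$ into itself, and $g^{-1}$ does too. Hence $V$ acts on $\C_{00}$ by permutations.

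This action is faithful. The set $\C_{00}$ is dense in $\C$, since every cylinder $u\C$ contains $u0^\infty$. A homeomorphism fixing a dense set pointwise is the identity.

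\textbf{The order.} The map $c$ is a bijection from $\C_{00}$ onto $\{\varepsilon\}$ together with the finite words ending in $1$. The shortlex order pulled back through $c$ is therefore a well-order, of order type $\omega$.

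\textbf{Partition into increasing pieces.} Partition $\C_{00}$ into the $m$ sets $Q_i:=u_i\C\cap\C_{00}$. I claim that $g|_{Q_i}$ is increasing for each $i$. This is the key step, and the only subtle point is the trailing-zero stripping in the canonical code.

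Write $x=u_iz$ with $z\in\C_{00}$. There are two cases.
\begin{enumerate*}[label=(\roman*)]
\item If $z\neq0^\infty$, then $c(z)$ is nonempty and ends in $1$. So $c(u_iz)=u_ic(z)$ and $c(v_iz)=v_ic(z)$.
\item If $z=0^\infty$, then $c(u_i0^\infty)$ is $u_i$ with its trailing zeros removed, which has length at most $|u_i|$. Similarly $c(v_i0^\infty)$ has length at most $|v_i|$.
\end{enumerate*}

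In case (ii), $u_i0^\infty$ has strictly the shortest code in $Q_i$, because every other element of $Q_i$ has code length at least $|u_i|+1$. By the same argument, $v_i0^\infty$ is the shortlex minimum of $g(Q_i)=v_i\C\cap\C_{00}$. So $g$ sends the minimum of $Q_i$ to the minimum of $g(Q_i)$.

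Now take two points $u_iz$ and $u_iz'$ in case (i). Their codes $u_ic(z)$ and $u_ic(z')$ have lengths $|u_i|+|c(z)|$ and $|u_i|+|c(z')|$. When the lengths are equal, the lexicographic comparison is decided by the common prefix $u_i$ followed by $c(z)$ versus $c(z')$. So their shortlex order is the same as that of $c(z)$ and $c(z')$. The identical argument with $v_i$ in place of $u_i$ shows the same for $v_ic(z)$ and $v_ic(z')$.

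Hence $g$ preserves order on $Q_i$, that is, $g|_{Q_i}$ is an increasing sequence.

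\textbf{Conclusion.} The permutation $g|_{\C_{00}}$ is partitioned into $m$ increasing sequences, so \cref{lem:monotone-cover} gives $\dw_<(g|_{\C_{00}})\leq m$. Every element of $V$ has some finite prefix table, so every element has finite decreasing width. The action on the well-ordered set $(\C_{00},<)$ is faithful. Therefore \cref{thm:finite-shuffle}(a) shows that $V$ has finite twin-width.
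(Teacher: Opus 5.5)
Your proof is correct and follows the paper's own argument: you split $\C_{00}$ into the $m$ branch pieces $u_i\C\cap\C_{00}$, and you show each branch is increasing using $c(u_iz)=u_ic(z)$ for $z\neq0^\infty$ together with the fact that $u_i0^\infty$ and $v_i0^\infty$ are the respective minima; you then conclude via \cref{lem:monotone-cover}, faithfulness from density of $\C_{00}$, and \cref{thm:finite-shuffle}(a). Your explicit handling of the trailing zeros in $c(u_i0^\infty)$, and of $c$ as a bijection onto words ending in $1$, makes precise points that the paper only asserts.
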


\begin{proof}
The set $\C_{00}$ is invariant under prefix replacement. Fix a branch  $u_i z\longmapsto v_i z$.
If $z\neq0^\infty$, then we have $c(u_i z)=u_i c(z)$, and  $c(v_i z)=v_i c(z)$.
Thus, for $z,z'\neq0^\infty$, shortlex comparison is preserved: on each side a fixed prefix length is added, and equal-length lexicographic comparison ignores the common prefix. Moreover, $u_i0^\infty$ is the least point of $u_i\C_{00}$, while $v_i0^\infty$ is the least point of $v_i\C_{00}$. Therefore the restriction of the branch to $u_i\C_{00}$ is increasing.

The permutation $g|_{\C_{00}}$ is consequently covered by $m$ increasing partial permutations. By \cref{lem:monotone-cover},
$\dw_{<}(g|_{\C_{00}})\leq m$.
The set $\C_{00}$ is dense in $\C$, and every element of $V$ is continuous. Hence an element fixing $\C_{00}$ pointwise is the identity, so the action on $\C_{00}$ is faithful. The conclusion follows from \cref{thm:finite-shuffle}(a).
\end{proof}

\begin{lem}\label{lem:V-symmetric}
Thompson's group $V$ has infinite uniform twin-width.
\end{lem}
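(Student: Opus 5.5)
The plan is to show that $V$ contains a copy of every finite symmetric group $\Sym(m)$, hence of every alternating group $\Alt(m)$. Then \cref{known:subgroup} gives $\utww(V)\geq\utww(\Alt(m))$ for every $m$, and \cref{cor:finite-unbounded} (equivalently \cref{thm:D-utww} applied to $\Alt(m)$) shows that the right-hand side is unbounded. So $\utww(V)=\infty$.

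The embedding is built from prefix tables. Fix $m\geq 2$ and choose any complete prefix code $U=\{u_1,\ldots,u_m\}$ with exactly $m$ words. For instance, take $u_i=1^{i-1}0$ for $i<m$ and $u_m=1^{m-1}$; one checks directly that $\C=\bigsqcup_i u_i\C$. For $\pi\in\Sym(m)$, define $g_\pi\in V$ by the prefix table with branches $u_i\mapsto u_{\pi(i)}$, so that $g_\pi(u_iz)=u_{\pi(i)}z$. This is a legitimate element of $V$ by \cref{known:V-model}, since both the domain and the range codes equal $U$.

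Next we check that $\pi\mapsto g_\pi$ is an injective homomorphism. Composition is computed branch by branch: $g_\pi g_\sigma(u_iz)=g_\pi(u_{\sigma(i)}z)=u_{\pi\sigma(i)}z$, so $g_\pi g_\sigma=g_{\pi\sigma}$. Injectivity holds because if $\pi(i)\neq i$, then $g_\pi$ sends the point $u_i0^\infty$ into the cylinder $u_{\pi(i)}\C$, which is disjoint from $u_i\C$. Hence $g_\pi$ is not the identity homeomorphism, so it is a nontrivial element of $V$. This gives $\Alt(m)\leq\Sym(m)\hookrightarrow V$ for all $m$.

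No step here is a real obstacle. The only point needing care is that \cref{known:subgroup} is stated for arbitrary groups, which is what lets it apply to the infinite group $V$ with its right regular action. It is also worth noting that the finite subgroups do not interfere with the well-order used in \cref{prop:V-shuffle}, because that lemma concerns twin-width rather than uniform twin-width.
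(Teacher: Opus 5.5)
Your proposal is correct and follows essentially the same route as the paper: embed a finite symmetric group in $V$ by permuting the cylinders of a complete prefix code, then combine subgroup monotonicity with the unboundedness of $\utww(\Alt(m))$. The only difference is cosmetic. The paper uses the code of all words of length $n$ to get $\Sym(2^n)$, while you use an $m$-word code to get every $\Sym(m)$ and spell out the homomorphism and injectivity checks that the paper leaves implicit.
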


\begin{proof}
The words of length $n$ form a complete prefix code. Permuting their cylinders by prefix replacements embeds $\Sym(2^n)\leq V$, and hence $\Alt(2^n)\leq V$.
By \cref{cor:finite-unbounded}, the uniform twin-widths of these alternating groups tend to infinity. Subgroup monotonicity gives $\utww(V)=\infty$.
\end{proof}

While we are at it, let us settle the question of twin-width and uniform twin-width for the other two Thompson's groups as well. We know that Thompson's group $F$ has uniform twin-width $2$, as it is right-orderable (see \cite[Proposition 5.2]{BGTT}). So, there remains only the group $T$ to examine. Since $T\leq V$ (through \cref{known:faithful-action}), $T$ also has finite twin-width.

A group is \defin{circularly orderable} if it admits a circular ordering that is preserved by right self-action (in fact, left or right does not matter; if $c_\ell$ is a left-invariant circular order on a group $G$, then $c_{\mathrm r}(x,y,z):=c_{\ell}(x^{-1},y^{-1},z^{-1})$
is a right-invariant circular order). We prove the following.

\begin{lem}\label{lem:cyclicorder}
Every nontrivial circularly orderable group $G$ satisfies
$\operatorname{utww}(G)=2$.
\end{lem}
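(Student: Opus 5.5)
Pick a right-invariant circular order $c$ on $G$ and cut it at a basepoint to get a total order $<$. The claim is that every right translation $\gamma(g)\colon x\mapsto xg$ becomes, in this order, a rotation: two increasing intervals glued in swapped order. Such a matrix consists of two increasing diagonal blocks in anti-diagonal position, so it has width at most $2$. The lower bound comes for free: $G$ is nontrivial, so the identity alone cannot witness everything, and \cref{lem:abelian} or the definition gives $\utww\geq 2$ as in \cite[Proposition 5.2]{BGTT}. The main content is therefore the upper bound.

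Concretely, I define $x<y$ if and only if $x=e\neq y$, or $x,y\neq e$ and $c(e,x,y)=1$. This is the standard linearisation of a circular order at $e$, and it is a total order.

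Fix $g$. Right-invariance gives $c(x,y,z)=c(xg,yg,zg)$, so $\gamma(g)$ preserves the circular order. A bijection of a linearly ordered set that preserves the induced circular order is determined by where the cut point goes. Precisely, put $A=\{x: x<g^{-1}\}$ and $B=\{x: x\geq g^{-1}\}$. Then $\gamma(g)$ is increasing on $B$, mapping it onto an initial interval starting at $e=g^{-1}g$. It is also increasing on $A$, mapping it onto the complementary final interval, and every image of $B$ lies below every image of $A$. I will verify this by cases using the cocycle and transitivity axioms of circular orders. This case check is the main, though routine, obstacle.

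Hence $M_{\gamma(g)}$ is obtained from the $2\times2$ anti-diagonal bijection matrix (the matrix of the transposition, or the identity if $g=e$) by substituting increasing bijection matrices. An increasing bijection matrix has twin-width at most $1$: merge consecutive rows and columns pairwise along the diagonal, keeping at most $2$ ones per line. By \cref{lem:sub} and \cref{lem:gridbound} (which gives $\stww\leq 2$ for a $2\times 2$ permutation matrix), $\stww(M_{\gamma(g)})\leq 2$ for every $g$. This also holds for infinite $G$, since \cref{lem:sub} covers infinite ordered bijection matrices. Taking the supremum gives $\utww(G)\leq 2$.

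For the matching lower bound I would cite the same fact used for abelian groups and right-orderable groups in \cite{BGTT}: any nontrivial group has $\utww\geq2$. Concretely, a nontrivial translation moves some point, and the resulting matrix contains a row or column, after merges, carrying two ones, so width $1$ is impossible.
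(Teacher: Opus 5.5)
Your proposal is correct and follows essentially the same route as the paper: cut the right-invariant circular order at $e$, split at $g^{-1}$ so that $M_{\gamma(g)}$ becomes the $2\times 2$ anti-diagonal matrix with increasing blocks substituted, apply \cref{lem:sub}, and take the lower bound from the fact that every bijection matrix with at least two rows has strict twin-width at least $2$. One slip to fix: by your own merge description and lower-bound argument, an increasing bijection matrix with at least two rows has strict twin-width exactly $2$, not at most $1$; this does not affect the conclusion $\stww(M_{\gamma(g)})\leq 2$, since \cref{lem:sub} takes a maximum with the $2\times 2$ matrix anyway.
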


\begin{proof}
Let $c$ be a right-invariant circular order on $G$. Cut $c$ at the
identity $e$ to obtain the total order $\prec$ defined by
\[
  e\prec x\quad(x\neq e),
  \qquad
  x\prec y:\iff c(e,x,y)
  \quad(x,y\neq e).
\]

We fix $g\neq e$. The two sets $I^-:=\{x\in G:x\prec g^{-1}\}$, and $I^+:=\{x\in G:g^{-1}\preceq x\}$ are intervals.  Since right multiplication
$R_g\colon x\mapsto xg$ preserves $c$, the sets $ R_g(I^+)$ and $R_g(I^-)$ are also intervals, and as $e\in R_g(I^+)$, we get $R_g(I^+)\prec R_g(I^-)$. Consequently, the bijection matrix of $R_g$ is obtained from the
$2\times2$ anti-diagonal permutation matrix by substituting identity matrices for the two $1$-entries. Each of these matrices has strict
twin-width at most $2$, so \cref{lem:sub} gives $\operatorname{stww}(M_{R_g})\leq 2$.
The identity translation is monotone and satisfies the same bound. Thus, the order $\prec$ witnesses
  $\operatorname{utww}(G)\leq 2$.
Since $G$ is nontrivial and $2$ is the minimum strict twin-width of a
nontrivial bijection matrix, equality follows.
\end{proof}

In its standard realization, $T$ is the countable group of
orientation-preserving dyadic piecewise-linear homeomorphisms of $S^{1}$; see \cite{CFP}. Moreover, it is known that a countable group is circularly orderable if and only if it embeds in
$\operatorname{Homeo}_{+}(S^{1})$, see \cite[Proposition~4.12]{CMR}. By \cref{lem:cyclicorder}, we have our final theorem.  
\begin{thm}
Thompson's group $T$ satisfies $\operatorname{utww}(T)=2$.
\end{thm}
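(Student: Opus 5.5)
The plan is to combine the two facts recorded immediately before the statement. First, I will show that $T$ is circularly orderable. Then I will invoke \cref{lem:cyclicorder}, which gives $\utww(T)=2$ for any nontrivial circularly orderable group.

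First, $T$ is nontrivial, and it is countable: each element is given by a finite prefix table, or equivalently by finitely many dyadic breakpoints together with dyadic slopes. In its standard realization $T$ acts faithfully on $S^1$ by orientation-preserving dyadic piecewise-linear homeomorphisms, so it embeds in $\operatorname{Homeo}_+(S^1)$. By \cite[Proposition~4.12]{CMR}, a countable group embedding in $\operatorname{Homeo}_+(S^1)$ admits a left-invariant circular order $c_\ell$.

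As noted before \cref{lem:cyclicorder}, setting $c_{\mathrm r}(x,y,z):=c_\ell(x^{-1},y^{-1},z^{-1})$ gives a right-invariant circular order. So $T$ is circularly orderable in the sense the paper uses, namely invariance under the right self-action $\gamma$ that defines $\utww$. Applying \cref{lem:cyclicorder} then yields $\utww(T)=2$.

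The upper bound comes from cutting the circular order at $e$, as in \cref{lem:cyclicorder}. The lower bound holds because any nontrivial translation matrix has strict twin-width at least $2$.

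The only real content is the embedding-to-circular-order step, which is quoted from \cite{CMR}. If I had to prove it directly, I would proceed as follows. Choose a dense sequence of points $p_1,p_2,\dots$ on $S^1$. Compare $g,h,k$ by looking at the first index $i$ where the images $g(p_i),h(p_i),k(p_i)$ are not all equal, and use the cyclic orientation of those three points there, handling the degenerate case where only two coincide. I expect that degenerate coincidence case to be the one delicate point.
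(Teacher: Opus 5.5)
Your proposal is correct and follows essentially the same route as the paper: $T$ is a countable subgroup of $\operatorname{Homeo}_+(S^1)$, so it is circularly orderable by \cite[Proposition~4.12]{CMR}, and \cref{lem:cyclicorder} then gives $\utww(T)=2$. Your explicit conversion from a left-invariant to a right-invariant order is a helpful detail the paper leaves implicit. Your closing sketch of a direct proof of the embedding-to-circular-order step is not needed, since that step is cited rather than proved, exactly as in the paper.
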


%%%%%%%%%%%%%%%%%%%%%%%%%%%%%%%%%%%%%%%%%%%%%%%%%%%%%%%%%%%%%%%%%%%%%%%%%%%%%%%%%%%%%%%%%%%%%%%%%%%

\bibliographystyle{plainurlnat}
\bibliography{ref}

\newpage
    
\end{document}